\documentclass[11pt]{article}

\usepackage[T1]{fontenc}
\usepackage[english]{babel}
\usepackage[a4paper,margin=1in]{geometry}
\usepackage{amsmath,amssymb,amsthm,mathtools,bm}
\usepackage{booktabs,array}
\usepackage{microtype}
\usepackage{hyperref}
\usepackage{enumitem}
\usepackage{cleveref}
\allowdisplaybreaks
\hypersetup{
  hidelinks,
  pdftitle={Refined Bounds for Unions of Several Parts in Balanced Graph Partitions},
  pdfauthor={Zhanping Yang},
  pdfsubject={Judicious partitions and unions of several color classes},
  pdfkeywords={judicious partitions, balanced graph partitions, induced edges, probabilistic method, centered quadratic forms, Cantelli inequality}
}

\newtheorem{theorem}{Theorem}[section]
\newtheorem{lemma}[theorem]{Lemma}
\newtheorem{conjecture}[theorem]{Conjecture}
\newtheorem{corollary}[theorem]{Corollary}

\newtheorem{claim}[theorem]{Claim}

\theoremstyle{remark}

\newcommand{\E}{\mathbb{E}}

\newcommand{\Var}{\operatorname{Var}}

\newcommand{\calB}{\mathcal{B}}

\crefname{theorem}{theorem}{theorems}
\crefname{lemma}{lemma}{lemmas}
\crefname{conjecture}{conjecture}{conjectures}
\crefname{corollary}{corollary}{corollaries}
\crefname{proposition}{proposition}{propositions}
\crefname{remark}{remark}{remarks}

\title{Bounds for Unions of Several Parts in Balanced Graph Partitions}
\author{
  Zhanping Yang\thanks{Center for Discrete Mathematics, Fuzhou University, Fuzhou, Fujian 350108, China. Email: \texttt{yangzp@163.com}.}
}
\date{}

\begin{document}
\maketitle

\begin{abstract}
Let $k\ge3$ and $1\le \ell\le k-1$.  We study balanced $k$-partitions of a graph for which the union of any $\ell$ parts induces few edges.  We show that every graph $G$ with $n$ vertices
and $m$ edges admits a balanced partition $V_1,\ldots,V_k$ such that
\begin{equation*}
\max_{\substack{A\in\binom{[k]}{\ell}}}e_G\left(\bigcup_{i\in A}V_i\right)\le\frac{\ell^2}{k^2}m+\frac{\ell^2(k-\ell)}{k^2}(n-1)+\frac{\ell(k-\ell)}{k(k-1)}\sqrt{\left(\binom{k}{\ell}-1\right)m}.
\end{equation*}
In the case $\ell=2$, our result confirms a conjecture of Bollob\'as and Scott in a stronger form.
\end{abstract}

\noindent\textbf{2020 Mathematics Subject Classification.} 05C35, 05D40.

\noindent\textbf{Keywords.} judicious partition, balanced graph partition, union of part, induced edge

\section{Introduction}\label{sec:intro}
For a simple graph $G$, we use $V(G)$ and $E(G)$ to denote the vertex set and edge set of $G$,  respectively. We use $\delta(G)$ to denote the minimum degree of $G$. For a vertex set $S\subseteq V(G)$, let $e_G(S)$ denote the number of edges of $G$ with both endpoints in $S$.  A partition $V(G)=V_1\cup\cdots\cup V_k$ is called \emph{balanced} if $\bigl||V_i|-|V_j|\bigr|\le1$ for all $i,j\in[k]$.

Many classical graph partition problems optimize a single edge quantity. The Max-Cut problem, for example, asks for a bipartition with as many crossing edges as possible. Edwards~\cite{Edwards1973,Edwards1975} proved the essentially best possible bound that every graph with $m$ edges has a bipartite subgraph with at least $m/2+(\sqrt{2m+1/4}-1/2)/4$ edges. Alon~\cite{Alon1996} later showed that, for certain values of $m$, this bound can be improved by an additive term of order $m^{1/4}$. Bollob\'as and Scott~\cite{BollobasScottMaxCut2002} extended the Edwards-type estimate to $k$-partitions, obtaining a $k$-partite subgraph with at least 
\begin{equation*}
\frac{k-1}{k}m+\frac{k-1}{2k}\left(\sqrt{2m+\frac{1}{4}}-\frac{1}{2}\right)-\frac{(k-2)^2}{8k}
\end{equation*}
edges. Alon, Bollob\'as, Krivelevich, and Sudakov~\cite{AlonBollobasKrivelevichSudakov2003} further studied maximum cuts and judicious partitions in graphs without short cycles, giving stronger estimates for such graphs and relating large cuts to good judicious partitions.

A different direction is to optimize several quantities simultaneously. Bollob\'as and Scott~\cite{BollobasScott1993} developed this viewpoint under the name of judicious partitioning. In the hypergraph setting, Bollob\'as and Scott~\cite{BollobasScottHypergraphs1997} proved that every $3$-uniform hypergraph with $m$ edges admits a partition into $k$ parts such that each part spans at most $(1+o(1))m/k^3$ edges, which is asymptotically best possible. For graphs, Bollob\'as and Scott~\cite{BollobasScottExact1999} proved that there exists a $k$-partition
$V_1,\ldots,V_k$ such that each part satisfies
\begin{equation*}
e_G(V_i)\le \frac{m}{k^2}+\frac{k-1}{2k^2}\left(\sqrt{2m+\frac{1}{4}}-\frac{1}{2}\right).
\end{equation*}
Scott~\cite{ScottSurvey2005} gives a broader survey of these and related judicious partition problems.

The simultaneous control of cut edges and internal edges was developed further in a series of papers. Xu and Yu~\cite{XuYu2009} showed that one can obtain the above internal-edge bound while at the same time retaining a large $k$-cut, thereby answering a question of Bollob\'as and Scott~\cite{BollobasScottJudicious2002}. Xu and Yu~\cite{XuYu2011} strengthened this by obtaining a partition satisfying
\begin{equation*}
e_G(V_i)\le \frac{m}{k^2}+\frac{k-1}{2k^2}\left(\sqrt{2m+\frac{1}{4}}-\frac{1}{2}\right)
\end{equation*}
for every $i$ together with
\begin{equation*}
e_G(V_1,\ldots,V_k)\ge \frac{k-1}{k}m+\frac{k-1}{2k}\left(\sqrt{2m+\frac{1}{4}}-\frac{1}{2}\right)-\frac{17k}{8},
\end{equation*}
and also settled another related problem of Bollob\'as and Scott. Fan, Hou, and Zeng~\cite{FanHouZeng2014} obtained a further refinement.

 Lee, Loh, and Sudakov~\cite{LeeLohSudakov2013} extended classical cut bounds to bisections and obtained judicious bisection estimates for graphs of large minimum degree. In particular, if the minimum degree is $2r$ or $2r+1$, their results give a bisection $S_1,S_2$ satisfying
\begin{equation*}
\max\{e_G(S_1),e_G(S_2)\}\le\left(\frac{r+1}{2(2r+1)}+o(1)\right)m.
\end{equation*}
Xu and Yu~\cite{XuYu2014} subsequently confirmed a conjecture of Bollob\'as and Scott~\cite{BollobasScottJudicious2002} by proving that every graph $G$ with $\delta(G)\geq2$ has a bisection satisfying $\max\{e_G(S_1),e_G(S_2)\}\le {m}/{3}$ with the triangle as the unique extremal graph.

We now turn to the problem most closely related to the present paper: controlling the edges induced by unions of several parts. Bollob\'as and Scott~\cite{BollobasScottJudicious2002} asked for the smallest function $f(k,m)$ such that every graph with $m$ edges admits a $k$-partition $V_1,\ldots,V_k$ satisfying $e_G(V_i\cup V_j)\le f(k,m)$ for every $1\le i<j\le k$ and proposed several conjectures concerning this quantity. Ma, Yen, and Yu~\cite{MaYenYu2010} obtained several judicious partition results for graphs and hypergraphs; in particular, their graph results imply the asymptotic estimate $f(3,m)\le {m}/{2}+O(m^{4/5})$. Ma and Yu~\cite{MaYu2010} proved that $f(k,m)<{1.6m}/{k}+o(m)$ with the coefficient improved to $1.5/k$ for $k\ge23$. They further showed that if $\delta(G)\ge \varepsilon n$, then there is a $k$-partition satisfying
\begin{equation*}
\max_{1\le i<j\le k}e_G(V_i\cup V_j)\le\frac{4m}{k^2}+\left(\sqrt{\frac{2}{\varepsilon}}+\sqrt{8\log\binom{k}{2}}\right)m^{5/6}.
\end{equation*}
Thus the natural leading term $4m/k^2$ already appears for dense graphs. Bollob\'as and Scott~\cite{BollobasScottJudicious2002} also conjectured the universal bound
\begin{equation*}
\max_{1\le i<j\le k}e_G(V_i\cup V_j)\le\frac{12m}{(k+1)(k+2)}+O(n).
\end{equation*}
Fan and Hou~\cite{FanHou2017} confirmed this conjecture. More precisely, if $\Delta(G)$ is the maximum degree of $G$, they proved that one can achieve
\begin{equation*}
\max_{1\le i<j\le k}e_G(V_i\cup V_j)\le\frac{4m}{k^2}+\frac{4\Delta(G)}{k}+o(m^{7/8}),
\end{equation*}
and also obtained the asymptotically sharp estimate $f(k,m)\le {m}/{(k-1)}+o(m)$ whose leading term is forced by stars. Jin and Xu~\cite{JinXu2019} extended the study from pairs to unions of several parts in uniform hypergraphs. In particular, when the maximum degree is $o(m)$, their result for graphs gives a $k$-partition for which, for every $\ell$-tuple of distinct parts,
\begin{equation*}
e_G(V_{j_1}\cup\cdots\cup V_{j_\ell})\le\frac{\ell^2}{k^2}m+o(m).
\end{equation*}
This asymptotic leading term is the starting point for the general balanced problem considered here. The following sharper conjecture of Bollob\'as and Scott~\cite{BollobasScottJudicious2002} concerns the pair case and keeps the second-order term explicit.
\begin{conjecture}[\textbf{Bollob\'as and Scott} \cite{BollobasScottJudicious2002}]\label{conj:BS}
For $k\ge2$, every graph $G$ with $m$ edges and $n$ vertices has a partition into $k$ sets $V_1,\ldots,V_k$ such that
\begin{equation*}
\max_{1\le i<j\le k}e_G(V_i\cup V_j)\le \frac{4m}{k^2}+\frac{3(k-2)}{k^2}\sqrt{2m+\frac14}+\frac{(k-2)(2k-5)}{10}+O(n).
\end{equation*}
\end{conjecture}
Our main result treats unions of an arbitrary fixed number of parts and, at the same time, requires the partition to be balanced.
\begin{theorem}\label{thm:intro-general}
For $k\ge3$ and $1\le\ell\le k-1$, every graph $G$ with $n$ vertices and $m$ edges has a balanced partition into $k$ sets $V_1,\ldots,V_k$ such that
\begin{equation*}\label{eq:intro-general}
\max_{\substack{A\in\binom{[k]}{\ell}}}e_G\!\left(\bigcup_{i\in A}V_i\right)\le \frac{\ell^2}{k^2}m+\frac{\ell^2(k-\ell)}{k^2}(n-1)+\frac{\ell(k-\ell)}{k(k-1)}\sqrt{\left(\binom{k}{\ell}-1\right)m}.
\end{equation*}
\end{theorem}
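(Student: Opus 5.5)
The plan is to use a structured random balanced partition and then a deterministic ``max versus average'' inequality over the $N:=\binom{k}{\ell}$ choices of $A$. Order the vertices $v_1,\dots,v_n$ by non-increasing degree and cut this list into consecutive blocks of $k$ vertices; the last block may be shorter. Within each block, assign the vertices to distinct parts by an independent uniformly random injection into $[k]$. Every such partition is balanced.

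For $A\in\binom{[k]}{\ell}$ and a vertex $v$, put $x_{v,A}=\mathbf 1[v\in\bigcup_{i\in A}V_i]-\ell/k$. Then for an edge $uv$,
\[
\mathbf 1_u\mathbf 1_v=\frac{\ell^2}{k^2}+\frac{\ell}{k}(x_{u,A}+x_{v,A})+x_{u,A}x_{v,A},
\]
so that $e_G(\bigcup_{i\in A}V_i)=\frac{\ell^2}{k^2}m+L_A+Q_A$, where
\[
L_A=\frac{\ell}{k}\sum_v d(v)x_{v,A},\qquad Q_A=\sum_{uv\in E}x_{u,A}x_{v,A}.
\]

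\textbf{Linear term, handled deterministically.} In every full block each part receives exactly one vertex. Hence $\sum_{v\in\text{block}}x_{v,A}=0$ for every $A$ and every outcome. Within a block, $\sum_v d(v)x_{v,A}$ is a weighted sum of monotone weights with coefficients $1-\ell/k$ on $\ell$ vertices and $-\ell/k$ on the rest. It is therefore at most $\frac{\ell(k-\ell)}{k}(d_{\max}(\text{block})-d_{\min}(\text{block}))$. Summing over blocks, the differences telescope, because $d_{\min}$ of one block is at least $d_{\max}$ of the next. The incomplete last block needs a short separate check, comparing its deficit with the preceding block's minimum. This should give $L_A\le\frac{\ell^2(k-\ell)}{k^2}\Delta\le\frac{\ell^2(k-\ell)}{k^2}(n-1)$ for every $A$ simultaneously. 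This is exactly the middle term of the theorem.

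\textbf{Quadratic term, handled probabilistically.} For any reals $y_1,\dots,y_N$ with mean $\bar y$,
\[
\max_j y_j\le\bar y+\sqrt{\tfrac{N-1}{N}\textstyle\sum_j(y_j-\bar y)^2},
\]
by Cauchy--Schwarz, the Cantelli-type step. Apply this with $y_A=Q_A$ and use Jensen to get
\[
\E\max_A Q_A\le\E\bar Q+\sqrt{\tfrac{N-1}{N}\,\E\textstyle\sum_A Q_A^2}.
\]
Two estimates are then needed.
\begin{enumerate}
\item $\E\bar Q\le 0$. Since $\sum_A x_{v,A}x_{u,A}$ depends only on whether $u,v$ lie in the same part, $\bar Q$ equals a positive constant times $(\#\text{internal edges})-\frac{1}{k}m$ plus a correction. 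Its expectation is $\le0$: in our scheme $\Pr[u,v\text{ same part}]\le 1/k$, and it equals $0$ when $u,v$ share a block.
\item $\E Q_A^2\le m\bigl(\frac{\ell(k-\ell)}{k^2}\bigr)^2\cdot c$, with $c$ chosen so that the final coefficient becomes $\frac{\ell(k-\ell)}{k(k-1)}$. Expand $Q_A^2=\sum_{e,f}\E[x_ux_vx_{u'}x_{v'}]$. Vertices in different blocks are independent with $\E x=0$, so only pairs of edges whose four endpoints pair up inside blocks survive. For a single edge, $\E[x_u^2x_v^2]$ equals $\frac{\ell^2(k-\ell)^2}{k^4}$ across blocks, and a slightly smaller without-replacement value inside a block. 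One also has to control the terms with $e\neq f$ whose endpoints lie in common blocks. These involve $\E[x_ux_{u'}]=-\frac{\ell(k-\ell)}{k^2(k-1)}$ for distinct vertices in a block, and they are where the factor $\frac{k^2}{(k-1)^2}\cdot\frac1{\cdots}$ should arise.
\end{enumerate}
Summing over $A$ with $\sum_A x_{v,A}^2$ and $\sum_A x_{u,A}x_{v,A}$ computed exactly, which are constants depending only on whether $u,v$ share a part, should give $\E\sum_A Q_A^2\le \frac{\ell^2(k-\ell)^2}{k^2(k-1)^2}Nm$. That yields the third term $\frac{\ell(k-\ell)}{k(k-1)}\sqrt{(N-1)m}$. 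Finally pick an outcome with $\max_A Q_A$ at most its expectation, and add the three bounds.

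The main obstacle is step 2: the cross terms in $\E Q_A^2$ from edges sharing blocks. In the naive fully random model these contribute $\sum_v d(v)^2$-type quantities, far too large. The plan relies on the block structure and the centering over $A$. Summed over $A$, all such mixed terms reduce to $\pm$ multiples of the indicator that two vertices share a part, and within a block that indicator is deterministically $0$. I would first verify the exact identity for $\sum_A x_{u,A}x_{v,A}x_{u',A}x_{v',A}$ in terms of the partition pattern of $\{u,v,u',v'\}$. Only then would I bound each pattern's probability in the block model.
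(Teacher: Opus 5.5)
Your handling of the linear term is correct and reproduces the paper's bound on $D_A$. Rearrangement inside each block plus telescoping works, with the short last block padded by degree-$0$ dummies. Your endgame is also valid, and a slightly cleaner replacement for the paper's Cantelli, union bound and $\varepsilon\to0$ argument: the deterministic max-versus-mean inequality, Jensen, and choosing an outcome with $\max_A Q_A\le\E\max_A Q_A$.

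The genuine gap is Step~2. Put $\gamma:=\frac{\ell(k-\ell)}{k(k-1)}$; the estimate $\E Q_A^2\le\gamma^2m$ is false. Consider the pairs of edges lying inside two \emph{different} blocks, which you correctly note survive. They add up to $(\E Q_A)^2=\gamma^2m_0^2/k^2$, where $m_0$ is the number of edges inside blocks, and this is quadratic in $m_0$, not $O(m)$.

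Concretely, let $G=sK_k$ and order the vertices so that each clique is a block; this is legitimate, since all degrees are equal. As $\sum_{v\in B}x_{v,A}=0$, each block gives $Q_B=-\frac12\sum_{v\in B}x_{v,A}^2=-\frac{\ell(k-\ell)}{2k}$ for every outcome. Hence $Q_A\equiv-\frac{s\ell(k-\ell)}{2k}$ and $\E Q_A^2/(\gamma^2m)=\frac{s(k-1)}{2k}>1$ once $s\ge4$. The theorem itself is fine there, since all $Q_A$ coincide and are negative; only your intermediate estimate fails.

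Summing over $A$ cannot rescue it. The coloring law is invariant under permutations of $[k]$, so $\E\sum_AQ_A^2$ equals $\binom{k}{\ell}\E Q_A^2$ exactly, whatever identity you find for $\sum_A x_{u,A}x_{v,A}x_{u',A}x_{v',A}$.

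The fix is to keep $\sum_A(Q_A-\bar Q)^2=\sum_AQ_A^2-\binom{k}{\ell}\bar Q^2$ instead of discarding the last term. By the same symmetry all $\E Q_A$ equal $\E\bar Q$, so $\E\sum_A(Q_A-\bar Q)^2=\binom{k}{\ell}\bigl(\Var(Q_A)-\Var(\bar Q)\bigr)\le\binom{k}{\ell}\Var(Q_A)$. What you then need is $\Var(Q_A)\le\gamma^2m$. That is the paper's main technical theorem, and your proposal gives no argument for it. The paper proves it in three steps:
\begin{enumerate}
\item The orthogonal decomposition $\Var(Q_A)=\sum_B\Var(Q_B)+\sum_{B<C}\Var(Q_{BC})$, which follows from $\E_CQ_{BC}=0$ for every fixed coloring of $B$.
\item The exact covariance $\E\,\xi_B\xi_B^{\mathsf T}=\gamma\bigl(I-\frac1kJ\bigr)$. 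This gives $\Var(Q_{BC})=\gamma^2\|(I-\frac1kJ)M_{BC}(I-\frac1kJ)\|_{\mathrm F}^2\le\gamma^2e_G(B,C)$, and it is where the factor $\frac1{k-1}$ actually comes from.
\item The hard part, $\Var(Q_B)\le\gamma^2e_G(B)$ for edges inside a block. This needs the exact fourth-moment formula for a uniform random $\ell$-subset of a $k$-set. It then uses either de Caen's upper bound or the lower bounds $2e_G(B)$, $4e_G(B)^2/k$ on $\sum_{x\in B}d_{G[B]}(x)^2$, according to the sign of $2k^2-\ell(k-\ell)(k+6)$. It also needs a split at $e_G(B)=k/2$ and a separate check for $k=3$.
\end{enumerate}
Without the third step in particular, the coefficient $\gamma$ in front of $\sqrt{(\binom{k}{\ell}-1)m}$ is not justified.
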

 For $1\le \ell\le k-1$, the coefficient ${\ell^2(k-\ell)}/{k^2}$ of the linear term is asymptotically best possible among balanced $k$-partitions, as shown by $K_{\ell,n-\ell}$. The case $\ell=2$ gives the following explicit form of the Bollob\'as and Scott pair bound.
\begin{corollary}\label{cor:intro-BS}
For $k\ge2$, every graph $G$ with $n$ vertices and $m$ edges has a balanced partition into $k$ sets $V_1,\ldots,V_k$ such that
\begin{equation*}\label{eq:intro-BS}
\max_{1\le i<j\le k}e_G(V_i\cup V_j)\le \frac{4m}{k^2}+\frac{3(k-2)}{k^2}\sqrt{2m+\frac14}+\frac{(k-2)(2k-5)}{10}+C_kn,
\end{equation*}
where $C_k=\frac{k-2}{k^2}+\frac{k-2}{k(k-1)}\sqrt{k(k-1)-2}$.
\end{corollary}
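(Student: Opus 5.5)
The plan is to derive the corollary directly from \cref{thm:intro-general} with $\ell=2$ when $k\ge3$, and to treat $k=2$ by hand. The whole argument is an elementary comparison of the two right-hand sides. The one non-routine point is choosing how to split $C_kn$ so that each term of the theorem's bound is absorbed by a matching term of the conjectured form.

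\emph{Case $k=2$.} There is only one pair, and $V_1\cup V_2=V(G)$. So for any balanced bisection the left side equals $m$. The right side is also $m$, because $4m/k^2=m$, the factors $k-2$ vanish, and $C_2=0$. Hence the claim is trivial.

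\emph{Case $k\ge3$.} Apply \cref{thm:intro-general} with $\ell=2$, noting $\binom{k}{2}-1=\frac{k(k-1)-2}{2}$. This gives a balanced partition with
\begin{equation*}
\max_{i<j}e_G(V_i\cup V_j)\le\frac{4m}{k^2}+\frac{4(k-2)}{k^2}(n-1)+(k-2)\,a\sqrt{2m},\qquad a:=\frac{\sqrt{k(k-1)-2}}{k(k-1)}.
\end{equation*}
Subtract $4m/k^2$ from both bounds and divide by $k-2>0$. It then suffices to prove
\begin{equation*}
\frac{4(n-1)}{k^2}+a\sqrt{2m}\le\frac{3}{k^2}\sqrt{2m+\tfrac14}+\frac{2k-5}{10}+\frac{n}{k^2}+a\,n .
\end{equation*}

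Two facts give this.
\begin{enumerate}[label=(\roman*)]
\item Since $m\le n(n-1)/2$, we have $\sqrt{2m}\le n$, so $a(n-\sqrt{2m})\ge0$.
\item We have $a\ge 3/k^2$ for $k\ge3$. Squaring, this is equivalent to $k^2(k-2)(k+1)\ge 9(k-1)^2$. Equality holds at $k=3$ (both sides equal $36$), and the left side grows faster for larger $k$.
\end{enumerate}
By (i) and (ii), $a(n-\sqrt{2m})\ge \frac{3}{k^2}(n-\sqrt{2m})$. Dropping the nonnegative term $(2k-5)/10$, it therefore suffices to check
\begin{equation*}
\frac{4(n-1)-n-3(n-\sqrt{2m})-3\sqrt{2m+1/4}}{k^2}\le 0 .
\end{equation*}
The numerator simplifies to $-4-3\bigl(\sqrt{2m+1/4}-\sqrt{2m}\bigr)<0$, so the inequality holds.

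The main obstacle is the bookkeeping that splits $C_kn$. The part $\frac{k-2}{k(k-1)}\sqrt{k(k-1)-2}\,n$ of $C_kn$ absorbs the theorem's square-root term, using $\sqrt{2m}\le n$. The leftover slack $(a-3/k^2)n$ from that absorption, together with the explicit $\frac{k-2}{k^2}n$ part of $C_kn$ and the $\frac{3(k-2)}{k^2}\sqrt{2m+1/4}$ term, then covers the linear term $\frac{4(k-2)}{k^2}(n-1)$. The only genuinely checkable point is the inequality $a\ge 3/k^2$ in (ii), which is tight at $k=3$.
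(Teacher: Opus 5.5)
Your proposal is correct and is essentially the paper's proof. It has the same trivial $k=2$ case and the same key comparison $a\ge 3/k^2$, which the paper checks by squaring and factoring out $(k-3)(k^3+2k^2-5k+3)\ge0$. It also splits the square-root term the same way, into $\frac{3}{k^2}\sqrt{2m}\le\frac{3}{k^2}\sqrt{2m+\frac14}$ plus a remainder absorbed by $\sqrt{2m}\le n$.

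The only informal spot is ``the left side grows faster'' in (ii). You can make this rigorous in one line: for $k\ge3$ you have $k^2\ge 9$ and $(k-2)(k+1)-(k-1)^2=k-3\ge 0$.
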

The paper is organized as follows. Section~\ref{sec:Degree Blocks and Coloring} constructs the balanced block coloring and proves the required degree volume bound. Section~\ref{sec:Random Edge Counts} introduces the random edge-count decomposition. Section~\ref{sec:Block Estimates} estimates the block contributions and derives the variance bound. Section~\ref{sec:Proof of Main Theorem} combines these estimates to prove Theorem~\ref{thm:intro-general} and Corollary~\ref{cor:intro-BS}.
\section{Degree Blocks and Coloring}\label{sec:Degree Blocks and Coloring}
In this section, we construct the balanced block coloring and introduce the degree volumes of the color classes.  We also establish the degree bound that will be used in the later estimates.

Choose $r_0\in\{0,1,\ldots,k-1\}$ so that $N:=n+r_0$ is divisible by $k$, and let $\widehat G:=G\cup r_0K_1$. Thus $\widehat G$ is obtained from $G$ by adjoining $r_0$ isolated vertices, and $|V(\widehat G)|=N$. Order the vertices of $\widehat G$ as
\begin{equation}\label{eq:degree-order}
d_{\widehat G}(v_1)\ge d_{\widehat G}(v_2)\ge\cdots\ge d_{\widehat G}(v_N).
\end{equation}
Among vertices of degree zero, place the vertices of $V(G)$ before the vertices of $V(\widehat G)\setminus V(G)$. Since $k\mid N$, partition this ordering into consecutive blocks of size $k$: $B_s:=\{v_{(s-1)k+1},\ldots,v_{sk}\}$. Let $\mathcal B:=\{B_1,\ldots,B_{N/k}\}$ denote the resulting family of blocks. 

For each block $B_s\in\mathcal B$, independently choose a uniformly random bijection $\phi_s:B_s\to[k]$. Thus every color in $[k]$ occurs exactly once in each block. For $i\in[k]$, define \(\widetilde{V}_i\) be the color-\(i\) class in the enlarged graph. Since there are $N/k$ blocks and each block contains exactly one vertex of each color, we have $|\widetilde{V}_1| = \dots = |\widetilde{V}_k| = N/k$. Finally, for every $i\in[k]$, we obtain $V_i:=\widetilde V_i\cap V(G)$. By (\ref{eq:degree-order}), all vertices of $V(\widehat G)\setminus V(G)$ lie in the final block. Since $\phi_{N/k}$ is a bijection, these vertices receive distinct colors. Hence each color class loses at most one vertex when $V(\widehat G)\setminus V(G)$ is deleted, and therefore $|V_i|\in\left\{ N/k, N/k-1\right\}$.

For each $i\in[k]$, define the degree volume of the $i$th color class by
\begin{equation*}\label{eq:Di}
D_i:=\sum_{v\in\widetilde V_i}d_G(v)=\sum_{v\in V_i}d_G(v),
\end{equation*}
where the equality holds because every vertex in $V(\widehat G)\setminus V(G)$ is isolated. 

For $A\subseteq[k]$, define
\begin{equation*}
D_A:=\sum_{i\in A}D_i.
\end{equation*}
\begin{lemma}\label{lem:ell-volume}
	For every $A\subseteq[k]$ with $|A|=\ell$, we have
	\begin{equation*}
D_A\le\frac{2\ell m}{k}+\frac{\ell(k-\ell)}{k}(n-1).
	\end{equation*}
\end{lemma}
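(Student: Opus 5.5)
The bound holds deterministically, for every choice of the bijections $\phi_s$. The approach is to bound the contribution of each block separately and then telescope across blocks, using the monotone degree order \eqref{eq:degree-order}. Since every vertex of $V(\widehat G)\setminus V(G)$ is isolated, we have $d_{\widehat G}(v)=d_G(v)$ for all $v$, so we can work with $d:=d_{\widehat G}$ throughout. Each block $B_s$ contains exactly one vertex of each color. Hence
\[
D_A=\sum_{s}\sum_{i\in A}d\bigl(\phi_s^{-1}(i)\bigr),
\]
and the inner sum is the total degree of some $\ell$ vertices of $B_s$.

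\textbf{Step 1 (one block).} Write the degrees in $B_s$ as $a_1\ge\cdots\ge a_k$, let $T_s=\sum_j a_j$, let $M_s=a_1$ and let $m_s=a_k$. Any $\ell$ of these degrees sum to at most $\sum_{j\le\ell}a_j$. We have
\[
\sum_{j\le \ell}a_j-\frac{\ell}{k}T_s=\frac{k-\ell}{k}\sum_{j\le\ell}a_j-\frac{\ell}{k}\sum_{j>\ell}a_j\le \frac{k-\ell}{k}\,\ell M_s-\frac{\ell}{k}(k-\ell)m_s .
\]
So the contribution of $B_s$ to $D_A$ is at most $\frac{\ell}{k}T_s+\frac{\ell(k-\ell)}{k}(M_s-m_s)$.

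\textbf{Step 2 (summing over blocks).} Summing Step 1 over all $s$ and using $\sum_s T_s=\sum_v d(v)=2m$ gives
\[
D_A\le \frac{2\ell m}{k}+\frac{\ell(k-\ell)}{k}\sum_s (M_s-m_s).
\]
The blocks are consecutive intervals of the nonincreasing order \eqref{eq:degree-order}, so $m_s\ge M_{s+1}$ for every $s$. The sum therefore telescopes:
\[
\sum_s(M_s-m_s)\le M_1-m_{N/k}\le d(v_1)\le \Delta(G)\le n-1 .
\]
This yields the claimed bound. When $n<k$ all of the added isolated vertices sit in the same single block, and the argument is unchanged.

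There is no real obstacle here. The only point needing care is the elementary inequality in Step 1, namely that the top-$\ell$ sum exceeds its proportional share $\frac{\ell}{k}T_s$ by at most $\frac{\ell(k-\ell)}{k}(M_s-m_s)$. After that, the telescoping across the degree-ordered blocks is immediate.
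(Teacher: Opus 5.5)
Your proof is correct and uses the same two ingredients as the paper. The first is the telescoping estimate $\sum_s\bigl(d(v_{(s-1)k+1})-d(v_{sk})\bigr)\le n-1$ over the degree-ordered blocks. The second is the averaging fact that $\ell$ of $k$ numbers with spread $R$ exceed $\frac{\ell}{k}$ of their total by at most $\frac{\ell(k-\ell)}{k}R$. The only difference is the order: you average inside each block and then sum, while the paper first sums to get $|D_i-D_j|\le n-1$ and then averages over the color-class volumes $D_1,\ldots,D_k$.
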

\begin{proof}[\bf Proof]
We first show that any two color classes have degree volumes differing by at most $n-1$. For each block $B_s=\{v_{(s-1)k+1},\ldots,v_{sk}\}$, let $x_{s,i}$ denote the unique vertex of $B_s$ receiving color $i$. Hence, for $i,j\in[k]$,
\begin{align}\label{eq:Di-Dj<n-1}
|D_i-D_j|&\le\sum_{s=1}^{N/k}\left|d_{\widehat G}(x_{s,i})-d_{\widehat G}(x_{s,j})\right|\notag\\&\le
\sum_{s=1}^{N/k}\left(d_{\widehat G}(v_{(s-1)k+1})-d_{\widehat G}(v_{sk})\right)\notag\\&=\sum_{s=1}^{N/k}\sum_{r=(s-1)k+1}^{sk-1}\left(d_{\widehat G}(v_r)-d_{\widehat G}(v_{r+1})\right)\notag\\&\le\sum_{r=1}^{N-1}\left(d_{\widehat G}(v_r)-d_{\widehat G}(v_{r+1})\right)\notag\\&=d_{\widehat G}(v_1)-d_{\widehat G}(v_N)\notag\\&\le n-1.
\end{align}
Now relabel $D_1,\ldots,D_k$ in nonincreasing order as $D_{(1)}\ge D_{(2)}\ge\cdots\ge D_{(k)}$. Since $|A|=\ell$, we have $D_A\le\sum_{r=1}^{\ell}D_{(r)}$. For every $r>\ell$, inequality~(\ref{eq:Di-Dj<n-1}) gives 
\begin{equation*}
D_{(r)}\ge D_{(1)}-(n-1)\geq\frac1\ell\sum_{s=1}^{\ell}D_{(s)}-(n-1),
\end{equation*}
because $D_{(1)}$ is the largest of the first $\ell$ terms. Therefore
\begin{equation*}
2m=\sum_{r=1}^{\ell}D_{(r)}+\sum_{r=\ell+1}^{k}D_{(r)}\ge\sum_{r=1}^{\ell}D_{(r)}+(k-\ell)
\left(\frac1\ell\sum_{r=1}^{\ell}D_{(r)}-(n-1)\right)=\frac{k}{\ell}\sum_{r=1}^{\ell}D_{(r)}-
(k-\ell)(n-1).
\end{equation*}
Then
\begin{equation*}
D_A\le\sum_{r=1}^{\ell}D_{(r)}\le\frac{2\ell m}{k}+\frac{\ell(k-\ell)}{k}(n-1),
\end{equation*}
as required.
\end{proof}
\section{Random Edge Counts}\label{sec:Random Edge Counts}
In this section, we decompose the induced edge count of a union of $\ell$ color classes into a deterministic part and a random term $Q_A$. We then split $Q_A$ into contributions from individual blocks and pairs of blocks, and record the basic properties needed later.

Fix $A\subseteq[k]$ with $|A|=\ell$.  For each
$v\in V(\widehat G)$, define
\begin{equation*}
X_v:=\mathbf 1_{\left\{v\in\bigcup_{i\in A}\widetilde V_i\right\}},\qquad\xi_v:=X_v-\frac{\ell}{k},\qquad Q_A:=\sum_{uv\in E(G)}\xi_u\xi_v.
\end{equation*}
\begin{lemma}
	For every $A\subseteq[k]$ with $|A|=\ell$, we have
	\begin{equation*}\label{lem:decomposition}
	e_G\left(\bigcup_{i\in A}V_i\right)=\frac{\ell}{k}D_A-\frac{\ell^2}{k^2}m+Q_A.
	\end{equation*}
\end{lemma}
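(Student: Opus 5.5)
The identity is purely algebraic, and I will derive it by expanding the product $X_uX_v$ edge by edge. For an edge $uv\in E(G)$, both endpoints lie in $V(G)$. Moreover, $\bigcup_{i\in A}V_i=\bigl(\bigcup_{i\in A}\widetilde V_i\bigr)\cap V(G)$. It follows that
\begin{equation*}
e_G\Bigl(\bigcup_{i\in A}V_i\Bigr)=\sum_{uv\in E(G)}X_uX_v .
\end{equation*}

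The first step is to substitute $X_u=\xi_u+\ell/k$ and $X_v=\xi_v+\ell/k$ and expand:
\begin{equation*}
X_uX_v=\xi_u\xi_v+\frac{\ell}{k}(\xi_u+\xi_v)+\frac{\ell^2}{k^2}.
\end{equation*}
Summing over the $m$ edges, the first term yields $Q_A$ and the last yields $\frac{\ell^2}{k^2}m$.

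The middle term is handled by the handshake regrouping:
\begin{equation*}
\sum_{uv\in E(G)}(\xi_u+\xi_v)=\sum_{v\in V(G)}d_G(v)\,\xi_v=\sum_{v\in V(G)}d_G(v)X_v-\frac{\ell}{k}\cdot 2m .
\end{equation*}
Next I evaluate $\sum_{v\in V(G)}d_G(v)X_v$. It equals $\sum_{i\in A}\sum_{v\in V_i}d_G(v)=\sum_{i\in A}D_i=D_A$, because the classes $\widetilde V_i$ are disjoint. Hence the middle contribution is $\frac{\ell}{k}\bigl(D_A-\frac{2\ell m}{k}\bigr)=\frac{\ell}{k}D_A-\frac{2\ell^2}{k^2}m$.

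Adding the three contributions gives
\begin{equation*}
Q_A+\frac{\ell}{k}D_A-\frac{2\ell^2}{k^2}m+\frac{\ell^2}{k^2}m=\frac{\ell}{k}D_A-\frac{\ell^2}{k^2}m+Q_A,
\end{equation*}
which is the stated identity.

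There is no real obstacle here. The only point needing care is bookkeeping about the added isolated vertices. Every edge has both endpoints in $V(G)$, and the isolated vertices in $V(\widehat G)\setminus V(G)$ contribute zero degree, so $D_A$ is the same whether it is computed in $G$ or in $\widehat G$ (as already noted in the definition of $D_i$).
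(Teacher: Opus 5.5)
Your proof is correct and is essentially the paper's argument. Like the paper, you expand $X_uX_v$ around $\ell/k$, regroup the linear term via $\sum_{uv\in E(G)}(\xi_u+\xi_v)=\sum_{v}d_G(v)\xi_v$, and identify $\sum_{v}d_G(v)X_v$ with $D_A$; your extra remarks on the isolated padding vertices are harmless bookkeeping.
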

\begin{proof}[\bf Proof]
	For every edge $uv\in E(G)$, the product $X_uX_v$ is the indicator that both endpoints belong to $\bigcup_{i\in A}V_i$. Hence
	\begin{align}\label{eq:eUVi}
	e_G\left(\bigcup_{i\in A}V_i\right)&=\sum_{uv\in E(G)}X_uX_v=\sum_{uv\in E(G)}\left(\frac{\ell^2}{k^2}+\frac{\ell}{k}(\xi_u+\xi_v)+\xi_u\xi_v\right)\notag\\&=
	\frac{\ell^2}{k^2}m+\frac{\ell}{k}\sum_{v\in V(G)}\left(d_G(v)\xi_v\right)+Q_A.
	\end{align}
	Moreover,
	\begin{equation*}
	\sum_{v\in V(G)}d_G(v)\xi_v=\sum_{v\in V(G)}d_G(v)\left(X_v-\frac{\ell}{k}\right)=\sum_{v\in \bigcup_{i\in A}\widetilde V_i}d_G(v)-\frac{\ell}{k}\sum_{v\in V(G)}d_G(v)=D_A-\frac{2\ell}{k}m.
	\end{equation*}
	Substituting (\ref{eq:eUVi}), we have
	\begin{equation*}
e_G\left(\bigcup_{i\in A}V_i\right)=\frac{\ell^2}{k^2}m+\frac{\ell}{k}\left(D_A-\frac{2\ell}{k}m
\right)+Q_A=\frac{\ell}{k}D_A-\frac{\ell^2}{k^2}m+Q_A.
	\end{equation*}
	This completes the proof.
\end{proof}
Independently in each block $B\in\mathcal B$, assign the colors $1,\ldots,k$ by a uniformly random bijection.  Throughout, $\mathbb P$ and $\mathbb E$ refer to this random block coloring.
We write $\mathbb E_B$ for expectation over the coloring of $B$ alone, and $\mathbb E_{B,C}$ for expectation over the independent colorings of two distinct blocks $B$ and $C$.
\begin{lemma}\label{lem:block-second}
For every vertex $u$, we have
\begin{equation*}
\E\xi_u^2=\frac{\ell(k-\ell)}{k^2}.
\end{equation*}
If $u\ne v$ lie in the same block, then
\begin{equation*}
\E(\xi_u\xi_v)=-\frac{\ell(k-\ell)}{k^2(k-1)}.
\end{equation*}
If $u$ and $v$ belong to different blocks, then $\E(\xi_u\xi_v)=0$. Consequently, $\mathbb E Q_A\le0$.
\end{lemma}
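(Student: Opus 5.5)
The plan is to compute the joint law of the indicators $X_u$ directly from the block coloring. Fix $A$ with $|A|=\ell$. For a vertex $u$ in block $B$, the color $\phi_B(u)$ is uniform on $[k]$. Hence $X_u$ is Bernoulli with parameter $\ell/k$, and $\E\xi_u=0$. Then $\E\xi_u^2=\Var X_u=\frac{\ell}{k}\bigl(1-\frac{\ell}{k}\bigr)=\frac{\ell(k-\ell)}{k^2}$.

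For distinct $u,v$ in the same block, the pair $(\phi_B(u),\phi_B(v))$ is uniform over ordered pairs of distinct colors. So $\mathbb P(X_u=X_v=1)=\frac{\ell(\ell-1)}{k(k-1)}$, and
\begin{equation*}
\E(\xi_u\xi_v)=\E(X_uX_v)-\frac{\ell^2}{k^2}=\frac{\ell(\ell-1)}{k(k-1)}-\frac{\ell^2}{k^2}=\frac{\ell\bigl(k(\ell-1)-\ell(k-1)\bigr)}{k^2(k-1)}=-\frac{\ell(k-\ell)}{k^2(k-1)}.
\end{equation*}
An alternative check uses $\sum_{w\in B}\xi_w=\ell-k\cdot\frac{\ell}{k}=0$ deterministically. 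Multiplying by $\xi_u$, taking expectations, and using symmetry gives $(k-1)\E(\xi_u\xi_v)=-\E\xi_u^2$, which yields the same value.

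If $u$ and $v$ lie in different blocks, the bijections $\phi_B$ and $\phi_C$ are independent. Hence $\xi_u$ and $\xi_v$ are independent with mean zero, and $\E(\xi_u\xi_v)=\E\xi_u\,\E\xi_v=0$.

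Finally, by linearity, $\E Q_A=\sum_{uv\in E(G)}\E(\xi_u\xi_v)$. An edge $uv$ has $u\neq v$, so each term is either $0$ (different blocks) or $-\frac{\ell(k-\ell)}{k^2(k-1)}\le 0$ (same block, since $1\le\ell\le k-1$). Therefore
\begin{equation*}
\E Q_A=-\frac{\ell(k-\ell)}{k^2(k-1)}\,\bigl|\{uv\in E(G): u,v\text{ in a common block}\}\bigr|\le 0.
\end{equation*}
None of these steps presents a real obstacle. The only point needing care is the same-block computation, specifically that the two colors are uniform distinct ordered pairs; this follows immediately from $\phi_B$ being a uniform bijection. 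The added isolated vertices cause no issue, because only vertices of $G$ appear as endpoints of edges.
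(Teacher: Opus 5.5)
Your proof is correct and follows essentially the same route as the paper. The paper likewise uses the Bernoulli variance of $X_u$, the same-block probability $\frac{\ell}{k}\cdot\frac{\ell-1}{k-1}$, independence across blocks, and linearity to get $\E Q_A=-\frac{\ell(k-\ell)}{k^2(k-1)}m_0\le 0$. Your extra consistency check via $\sum_{w\in B}\xi_w=0$ is valid but not needed; the paper records that identity later, in Lemma~\ref{lem:cov-matrix}.
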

\begin{proof}[\bf Proof]
	Since $\mathbb P(X_u=1)={\ell}/{k}$ and $\xi_u=X_u-{\ell}/{k}$, we have $\mathbb E\xi_u=0$ and
	\begin{equation*}
	\mathbb E\xi_u^2=\operatorname{Var}(X_u)=\frac{\ell}{k}\left(1-\frac{\ell}{k}\right)=\frac{\ell(k-\ell)}{k^2}.
	\end{equation*}
	Now suppose that $u\ne v$ belong to the same block. Exactly $\ell$ of the $k$ vertices in that block receive colors from $A$. Hence
	\begin{equation*}
	\mathbb{P}(X_u = X_v = 1) = \mathbb{P}(X_u = 1) \mathbb{P}(X_v = 1 \mid X_u = 1) = \frac{\ell}{k} \cdot \frac{\ell - 1}{k - 1} = \frac{\ell(\ell - 1)}{k(k - 1)}.
	\end{equation*}
	Hence
	\begin{equation*}
	\mathbb E(\xi_u\xi_v)=\mathbb E\left(\left(X_u-\frac{\ell}{k}\right)\left(X_v-\frac{\ell}{k}\right)\right)=\mathbb E(X_uX_v)-\frac{\ell}{k}\mathbb E X_u-\frac{\ell}{k}\mathbb E X_v+\frac{\ell^2}{k^2}=-\frac{\ell(k-\ell)}{k^2(k-1)}.
	\end{equation*}
	If $u$ and $v$ belong to different blocks, then their block colorings are independent. Therefore $\mathbb E(\xi_u\xi_v)=\mathbb E\xi_u\,\mathbb E\xi_v=0$. Finally, by the definition of $Q_A$ and linearity of expectation,
	\begin{equation*}
	\mathbb E Q_A=\mathbb E\left(\sum_{uv\in E(G)}\xi_u\xi_v\right)=
	\sum_{uv\in E(G)}\mathbb E(\xi_u\xi_v)=-\frac{\ell(k-\ell)}{k^2(k-1)}m_0\le0,
	\end{equation*}
where $m_0=\left|\left\{uv\in E(G):u,v\text{ belong to the same block}\right\}\right|$. This completes the proof.
\end{proof}
For a block $B\in\calB$, define
\begin{equation*}
Q_B:=\sum_{\substack{uv\in E(G)\\u,v\in B}}\xi_u\xi_v.
\end{equation*}
For distinct blocks $B,C\in\mathcal B$, define
\begin{equation*}
Q_{BC}:=\sum_{\substack{uv\in E(G)\\u\in B,\ v\in C}}\xi_u\xi_v.
\end{equation*}
Since every edge of $G$ is either contained in one block or joins two distinct blocks,
\begin{equation}\label{eq:block-decomp}
Q_A=\sum_{B\in\calB}Q_B+\sum_{B<C}Q_{BC}.
\end{equation}
For $B\in\mathcal B$, let $\mathbb E_B$ denote expectation with respect to the random coloring of $B$, with all other block colorings fixed.  For distinct blocks $B,C$, let $\mathbb E_{B,C}$ denote expectation with respect to their independent random colorings.
\begin{lemma}\label{lem:EBCQBC=0}
	If $B\ne C$, then, for every fixed coloring of $B$, we have $\mathbb E_C Q_{BC}=0$. Similarly, for every fixed coloring of $C$, we have $\mathbb E_B Q_{BC}=0$. Consequently, $\mathbb E_{B,C} Q_{BC}=0$.
\end{lemma}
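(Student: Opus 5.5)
The plan is to exploit linearity of $Q_{BC}$ in the variables of each block separately. Fix the coloring of $B$; then each $\xi_u$ with $u\in B$ is a constant. We can then write
\begin{equation*}
Q_{BC}=\sum_{v\in C}\Bigl(\sum_{\substack{u\in B\\ uv\in E(G)}}\xi_u\Bigr)\xi_v=\sum_{v\in C}a_v\,\xi_v,
\end{equation*}
where the coefficients $a_v$ depend only on the (fixed) coloring of $B$ and on $G$. Because $B\ne C$ are disjoint, no $\xi_v$ with $v\in C$ appears inside any $a_v$, so $Q_{BC}$ is an affine (in fact linear) function of the $C$-variables.

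Next I will take $\mathbb E_C$ of each term. The coloring of $C$ is a uniformly random bijection $\phi:C\to[k]$, independent of the coloring of $B$, so for each $v\in C$ we have $\mathbb P(X_v=1)=\ell/k$. This gives $\mathbb E_C\xi_v=0$, exactly as in the first part of Lemma~\ref{lem:block-second}; there the computation used only the uniformity of the bijection within a block. Linearity then yields $\mathbb E_C Q_{BC}=\sum_{v\in C}a_v\,\mathbb E_C\xi_v=0$. The statement for fixed $C$ and random $B$ follows by the symmetric argument with the roles of $B$ and $C$ exchanged.

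Finally, by independence of the two block colorings (Fubini / the tower property), $\mathbb E_{B,C}Q_{BC}=\mathbb E_B\bigl[\mathbb E_C Q_{BC}\bigr]=\mathbb E_B[0]=0$.

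There is no substantive obstacle here. The only point requiring care is that $Q_{BC}$ has no term quadratic in the variables of a single block. This holds because every edge counted in $Q_{BC}$ has exactly one endpoint in $B$ and one in $C$, which is guaranteed by $B\cap C=\emptyset$.
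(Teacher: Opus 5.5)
Your proposal is correct and follows the paper's proof essentially step for step. You fix the coloring of $B$, use linearity of $Q_{BC}$ in the $C$-variables together with $\mathbb E_C\xi_v=0$, swap the roles of $B$ and $C$ by symmetry, and finish with the tower property $\mathbb E_{B,C}Q_{BC}=\mathbb E_B(\mathbb E_C Q_{BC})=0$.
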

\begin{proof}[\bf Proof]
	Fix the coloring of $B$.  Then every $\xi_u$ with $u\in B$ is fixed,
	while the coloring of $C$ remains uniformly random. Hence 
	\begin{equation*}
\mathbb E_C Q_{BC}=\mathbb E_C\left(\sum_{\substack{uv\in E(G)\\u\in B,\ v\in C}}\xi_u\xi_v\right)=\sum_{\substack{uv\in E(G)\\u\in B,\ v\in C}}\xi_u\,\mathbb E_C\xi_v=0,
	\end{equation*}
because $\mathbb E_C\xi_v=0$ for every $v\in C$ by Lemma~\ref{lem:block-second}.
Interchanging $B$ and $C$ gives $\mathbb E_B Q_{BC}=0$. Finally, since the colorings of $B$ and $C$ are independent, we have $\mathbb E_{B,C}Q_{BC}=\mathbb E_B\bigl(\mathbb E_C Q_{BC}\bigr)=0$. This completes the proof.
\end{proof}
Lemma~\ref{lem:EBCQBC=0}, together with the independence of the block colorings, implies that distinct terms in the decomposition~\eqref{eq:block-decomp} have no covariance contribution.  This gives the following variance decomposition.
\begin{lemma}\label{lem:orthogonality}
Any two distinct summands on the right-hand side of \eqref{eq:block-decomp} are uncorrelated. Consequently,
\begin{equation*}\label{eq:variance-decomp}
\Var(Q_A)=\sum_{B\in\calB}\Var(Q_B)+\sum_{B<C}\Var(Q_{BC}).
\end{equation*}
\end{lemma}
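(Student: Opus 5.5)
We have to show that every pair of distinct summands in \eqref{eq:block-decomp} has zero covariance. Once that is done, the variance identity follows by expanding $\Var$ of a finite sum as the sum of the variances plus twice the sum of pairwise covariances. Since $\E Q_{BC}=0$ by Lemma~\ref{lem:EBCQBC=0}, the covariance of $Q_{BC}$ with any other summand $Y$ equals $\E(Q_{BC}Y)$. For two block terms $Q_B,Q_{B'}$ with $B\ne B'$ we argue by independence: $Q_B$ depends only on the coloring of $B$ and $Q_{B'}$ only on that of $B'$, and these colorings are independent, so $\operatorname{Cov}(Q_B,Q_{B'})=0$.

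For the mixed terms, the main idea is to find, in each case, a block $D$ whose coloring enters the product only through a factor that averages to zero. We then condition on all other blocks and apply Lemma~\ref{lem:EBCQBC=0}.

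Consider first $Q_B$ and $Q_{CD}$ with $C\ne D$. At least one of $C,D$, say $D$, differs from $B$. Fix the colorings of all blocks other than $D$. Then $Q_B$ is a constant, and $\E_D Q_{CD}=0$ by Lemma~\ref{lem:EBCQBC=0}. Hence $\E(Q_BQ_{CD})=\E\bigl(Q_B\,\E_D Q_{CD}\bigr)=0$.

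Next consider two distinct pair terms $Q_{BC}$ and $Q_{B'C'}$. Their pairs $\{B,C\}\ne\{B',C'\}$, so some block, say $C$, lies in $\{B,C\}$ but not in $\{B',C'\}$. Condition on every block except $C$. Then $Q_{B'C'}$ is fixed. Moreover $Q_{BC}=\sum\xi_u\xi_v$ over $u\in B$, $v\in C$, with the $\xi_u$ fixed, so $\E_C Q_{BC}=0$ by Lemma~\ref{lem:EBCQBC=0}. The product therefore has conditional expectation zero, and by the tower property its expectation is zero.

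With all cross covariances vanishing, $\Var(Q_A)=\sum_B\Var(Q_B)+\sum_{B<C}\Var(Q_{BC})$. There is no real obstacle. The only point needing care is the case analysis that picks the block appearing in only one of the two summands. The case of two block terms is not covered by this device and has to be handled by plain independence, as above, rather than by Lemma~\ref{lem:EBCQBC=0}, since $Q_B$ need not have mean zero.
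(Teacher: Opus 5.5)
Your proof is correct and follows essentially the paper's argument: both rest on Lemma~\ref{lem:EBCQBC=0} together with independence of the block colorings. You also rightly note that only the pairs $Q_B,Q_{B'}$ need plain independence, because $Q_B$ is not centered. The difference is only in how the cases are organized. The paper handles pairs with disjoint block support by independence, and for $Q_{BC},Q_{BD}$ it conditions on the shared block $B$ and factors $\E_{C,D}(Q_{BC}Q_{BD})=\E_CQ_{BC}\cdot\E_DQ_{BD}$. You instead average out a block that appears in only one of the two summands, which treats all mixed cases the same way and avoids the factorization step.
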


\begin{proof}[\bf Proof]
	We consider the possible pairs of distinct summands in~\eqref{eq:block-decomp}. If two summands depend on disjoint sets of blocks, then they are independent. Suppose next that two cross-block terms share exactly one block, say $Q_{BC}$ and $Q_{BD}$.  Fix the coloring of $B$. The remaining randomness comes from the independent colorings of $C$ and $D$. By Lemma~\ref{lem:EBCQBC=0}, we have
	\begin{equation*}
\mathbb E_{B,C,D}(Q_{BC}Q_{BD})=\mathbb E_B\left(\mathbb E_{C,D}(Q_{BC}Q_{BD})\right)=\mathbb E_B
\left((\mathbb E_CQ_{BC})\cdot(\mathbb E_DQ_{BD})\right)=0.
	\end{equation*}
Since $\mathbb E Q_{BC}=\mathbb E Q_{BD}=0$, it follows that 
\begin{equation*}
\operatorname{Cov}(Q_{BC},Q_{BD})=\mathbb E(Q_{BC}Q_{BD})-\mathbb E Q_{BC}\,\mathbb E Q_{BD}=0.
\end{equation*}
Finally, consider $Q_B$ and $Q_{BC}$.  Once the coloring of $B$ is fixed, $Q_B$ is fixed, by Lemma~\ref{lem:EBCQBC=0}
\begin{equation*}
\mathbb E_{B,C}(Q_BQ_{BC})=\mathbb E_B\left(Q_B\,\mathbb E_CQ_{BC}\right)=0.
\end{equation*}
Since $\mathbb E Q_{BC}=0$, we have $\operatorname{Cov}(Q_B,Q_{BC})=0$. These cases exhaust all pairs of distinct summands in~\eqref{eq:block-decomp}. Thus all cross-covariances vanish.  Therefore,
\begin{equation*}
\operatorname{Var}(Q_A)=\sum_{B\in\mathcal B}\operatorname{Var}(Q_B)+\sum_{B<C}\operatorname{Var}(Q_{BC}).
\end{equation*}
This completes the proof.
\end{proof}
\section{Block Estimates}\label{sec:Block Estimates}
In this section, we estimate the contributions from individual blocks and pairs of blocks.  We first study the corresponding block variables, then combine the resulting estimates to obtain a uniform bound for $Q_A$.

Fix an ordering $B=\{w_1,\ldots,w_k\}$ of a block, and write $\xi_B:=(\xi_{w_1},\ldots,\xi_{w_k})^{\mathsf T}$. Let $J$ denote the $k\times k$ all-ones matrix.
\begin{lemma}\label{lem:cov-matrix}
	For every block $B$, we have
	\begin{equation}\label{eq:EBBT}
\mathbb E(\xi_B\xi_B^{\mathsf T})=\frac{\ell(k-\ell)}{k(k-1)}\left(I-\frac1kJ\right).
	\end{equation}
	Moreover, $\sum_{v\in B}\xi_v=0$ for every coloring outcome.
\end{lemma}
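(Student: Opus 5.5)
The plan is to read off the matrix entrywise from Lemma~\ref{lem:block-second} and then check that the two descriptions agree. First I prove the deterministic identity, since it also explains the projection structure. In any coloring outcome, $\phi$ restricted to $B$ is a bijection onto $[k]$, so exactly $\ell$ vertices of $B$ receive a color in $A$. Hence $\sum_{v\in B}X_v=\ell$, and therefore
\begin{equation*}
\sum_{v\in B}\xi_v=\ell-k\cdot\frac{\ell}{k}=0.
\end{equation*}
This holds pointwise, with no expectation involved.

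For the covariance matrix, the diagonal entries are $\mathbb E\xi_{w_i}^2=\ell(k-\ell)/k^2$ by Lemma~\ref{lem:block-second}. The off-diagonal entries, for $w_i\ne w_j$ in the same block $B$, are $-\ell(k-\ell)/(k^2(k-1))$, again by Lemma~\ref{lem:block-second}.

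It remains to compare these with the right-hand side of \eqref{eq:EBBT}. Write $c=\ell(k-\ell)/(k(k-1))$. The diagonal entries of $c(I-\frac1kJ)$ equal
\begin{equation*}
c\left(1-\frac1k\right)=\frac{\ell(k-\ell)}{k(k-1)}\cdot\frac{k-1}{k}=\frac{\ell(k-\ell)}{k^2}.
\end{equation*}
The off-diagonal entries equal $-c/k=-\ell(k-\ell)/(k^2(k-1))$. Both match, which proves \eqref{eq:EBBT}.

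As a consistency check, $(I-\frac1kJ)\mathbf 1=0$, which agrees with $\xi_B^{\mathsf T}\mathbf 1=0$. There is no real obstacle. The only care needed is bookkeeping, namely writing the prefactor as $c(1-1/k)$ so that the diagonal matches.
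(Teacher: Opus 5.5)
Your proof is correct and takes the same route as the paper. You compare entries using Lemma~\ref{lem:block-second}, and you get the zero-sum claim from the pointwise identity $\sum_{v\in B}X_v=\ell$; proving that identity first, rather than last, is only a change of order.
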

\begin{proof}[\bf Proof]
The $(r,s)$-entry of $\mathbb E(\xi_B\xi_B^{\mathsf T})$ is $\mathbb E(\xi_{w_r}\xi_{w_s})$. By Lemma~\ref{lem:block-second}, we have
\begin{equation*}
\mathbb E(\xi_{w_r}\xi_{w_s})=\begin{cases}\dfrac{\ell(k-\ell)}{k^2},& r=s,\\[3mm]-\dfrac{\ell(k-\ell)}{k^2(k-1)},& r\ne s.
\end{cases}
\end{equation*}
	On the other hand, the matrix $	\frac{\ell(k-\ell)}{k(k-1)}\left(I-\frac1kJ\right)$ has diagonal entries
	\begin{equation*}
\frac{\ell(k-\ell)}{k(k-1)}\left(1-\frac1k\right)=\frac{\ell(k-\ell)}{k(k-1)}\frac{k-1}{k}=\frac{\ell(k-\ell)}{k^2},
	\end{equation*}
	and off-diagonal entries
	\begin{equation*}
-\frac{\ell(k-\ell)}{k(k-1)}\frac1k=-\frac{\ell(k-\ell)}{k^2(k-1)},
	\end{equation*}
	which proves~(\ref{eq:EBBT}). Finally, every block contains exactly $\ell$ vertices whose colors belong to $A$. Therefore $\sum_{v\in B}X_v=\ell$. Since $\xi_v=X_v-{\ell}/{k}$, we obtain
	\begin{equation*}
\sum_{v\in B}\xi_v=\sum_{v\in B}\left(X_v-\frac{\ell}{k}\right)=\sum_{v\in B}X_v-k\frac{\ell}{k}=\ell-\ell=0.
	\end{equation*}
This completes the proof.
\end{proof}
For distinct blocks $B,C$, fix orderings $B=\{u_1,\ldots,u_k\}$, $C=\{v_1,\ldots,v_k\}$, and define the $k\times k$ matrix $M_{BC}=(m_{rs})_{1\le r,s\le k}$ by $m_{rs}:=\mathbf 1_{\{u_rv_s\in E(G)\}}$. Thus $M_{BC}$ is the bipartite adjacency matrix of the edges between $B$ and $C$. Writing $\xi_B=(\xi_{u_1},\ldots,\xi_{u_k})^{\mathsf T}$ and $\xi_C=(\xi_{v_1},\ldots,\xi_{v_k})^{\mathsf T}$, we have
\begin{equation*}
\xi_B^{\mathsf T}M_{BC}\xi_C=\sum_{r=1}^k\sum_{s=1}^km_{rs}\xi_{u_r}\xi_{v_s}=\sum_{\substack{1\le r,s\le k\\u_rv_s\in E(G)}}
\xi_{u_r}\xi_{v_s}=\sum_{\substack{uv\in E(G)\\u\in B,\ v\in C}}\xi_u\xi_v=Q_{BC}.
\end{equation*}
\begin{lemma}\label{lem:cross-variance}
	For distinct blocks $B,C$, we have
	\begin{equation*}
\operatorname{Var}(Q_{BC})=\left(\frac{\ell(k-\ell)}{k(k-1)}\right)^2\left\|\left(I-\frac1kJ\right)M_{BC}\left(I-\frac1kJ\right)\right\|_\mathrm{F}^2\le\left(\frac{\ell(k-\ell)}{k(k-1)}\right)^2e_G(B,C),
	\end{equation*}
\end{lemma}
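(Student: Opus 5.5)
We use that $\xi_B$ and $\xi_C$ are independent, each has mean zero, and by Lemma~\ref{lem:cov-matrix} each has covariance matrix $\Sigma:=cP$ with $c:=\frac{\ell(k-\ell)}{k(k-1)}$ and $P:=I-\frac1kJ$. By Lemma~\ref{lem:EBCQBC=0}, $\E Q_{BC}=0$, so $\Var(Q_{BC})=\E(Q_{BC}^2)$.

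First we compute this second moment via the bilinear form $Q_{BC}=\xi_B^{\mathsf T}M_{BC}\xi_C$. Write $Q_{BC}^2=\xi_B^{\mathsf T}M\xi_C\xi_C^{\mathsf T}M^{\mathsf T}\xi_B$ with $M=M_{BC}$. Condition on the coloring of $B$ and take $\E_C$. Independence gives $\E_C(\xi_C\xi_C^{\mathsf T})=\Sigma$, so the conditional expectation is $\xi_B^{\mathsf T}M\Sigma M^{\mathsf T}\xi_B=\operatorname{tr}(M\Sigma M^{\mathsf T}\xi_B\xi_B^{\mathsf T})$. Taking $\E_B$ then yields
\[
\E(Q_{BC}^2)=\operatorname{tr}(M\Sigma M^{\mathsf T}\Sigma)=c^2\operatorname{tr}(MPM^{\mathsf T}P).
\]
Since $P$ is symmetric and idempotent,
\[
\operatorname{tr}(MPM^{\mathsf T}P)=\operatorname{tr}\bigl((PMP)(PMP)^{\mathsf T}\bigr)=\|PMP\|_{\mathrm F}^2,
\]
which is the claimed identity.

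For the inequality, the maps $X\mapsto PX$ and $X\mapsto XP$ are orthogonal projections on $\mathbb R^{k\times k}$ with the Frobenius inner product. Indeed, left multiplication by the orthogonal projection $P$ acts columnwise, and right multiplication acts rowwise. Hence $\|PMP\|_{\mathrm F}\le\|MP\|_{\mathrm F}\le\|M\|_{\mathrm F}$. Because $M$ is a $0/1$ matrix whose entries indicate the edges between $B$ and $C$ (the blocks are disjoint, so each edge is counted once), $\|M\|_{\mathrm F}^2=e_G(B,C)$. Multiplying by $c^2$ gives the bound.

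The only step needing care is the conditioning: we must use the covariance of $\xi_C$ for fixed $\xi_B$ and then average over $B$. This is legitimate because the colorings are independent. We also need $P^2=P$ and $P^{\mathsf T}=P$, which follow from $J^2=kJ$. No further obstacles are expected.
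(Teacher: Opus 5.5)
Your proposal is correct and follows the paper's proof essentially step for step: condition on the coloring of $B$, use $\E_C(\xi_C\xi_C^{\mathsf T})=cP$ to get $c^2\operatorname{tr}(MPM^{\mathsf T}P)=c^2\|PMP\|_{\mathrm F}^2$, and then bound $\|PMP\|_{\mathrm F}\le\|MP\|_{\mathrm F}\le\|M\|_{\mathrm F}=\sqrt{e_G(B,C)}$ by working columnwise and then rowwise. The only cosmetic difference is that you cite directly that $X\mapsto PX$ and $X\mapsto XP$ are orthogonal projections, whereas the paper checks $\|Px\|_2^2=\|x\|_2^2-\frac1k\bigl(\sum_r x_r\bigr)^2\le\|x\|_2^2$ by hand.
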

\begin{proof}[\bf Proof]
	By Lemma~\ref{lem:EBCQBC=0}, we have $\mathbb E_{B,C}Q_{BC}=0$ and hence $\operatorname{Var}(Q_{BC})=\mathbb E_{B,C}Q_{BC}^2$. Fix the coloring of $B$. Since $Q_{BC}=\xi_B^{\mathsf T}M_{BC}\xi_C$, we have
	\begin{align*}
\mathbb E_C Q_{BC}^2&=\mathbb E_C\left(\xi_B^{\mathsf T}M_{BC}\xi_C\xi_C^{\mathsf T}M_{BC}^{\mathsf T}\xi_B\right)\\&=\xi_B^{\mathsf T}M_{BC}\mathbb E_C(\xi_C\xi_C^{\mathsf T})M_{BC}^{\mathsf T}\xi_B\\&=\frac{\ell(k-\ell)}{k(k-1)}\xi_B^{\mathsf T}M_{BC}\left(I-\frac1kJ\right)M_{BC}^{\mathsf T}\xi_B.
	\end{align*}
	Taking expectation over the coloring of $B$ and using Lemma~\ref{lem:cov-matrix} gives
	\begin{align*}
\operatorname{Var}(Q_{BC})&=\frac{\ell(k-\ell)}{k(k-1)}\mathbb E_B\left(\xi_B^{\mathsf T}M_{BC}\left(I-\frac1kJ\right)M_{BC}^{\mathsf T}\xi_B\right)\\&=\frac{\ell(k-\ell)}{k(k-1)}\operatorname{tr}\left(M_{BC}\left(I-\frac1kJ\right)M_{BC}^{\mathsf T}\mathbb E_B(\xi_B\xi_B^{\mathsf T})\right)\\&=\left(\frac{\ell(k-\ell)}{k(k-1)}\right)^2\operatorname{tr}\left(M_{BC}\left(I-\frac1kJ\right)M_{BC}^{\mathsf T}\left(I-\frac1kJ\right)\right).
	\end{align*}
	Since $\left(I-J/k\right)^{\mathsf T}=I-J/k$ and $\left(I-J/k\right)^2=I-J/k$, we have
	\begin{equation}\label{eq:VarQBC}
	\operatorname{Var}(Q_{BC})=\left(\frac{\ell(k-\ell)}{k(k-1)}\right)^2\left\|\left(I-\frac1kJ\right)M_{BC}\left(I-\frac1kJ\right)\right\|_\mathrm{F}^2.
	\end{equation}
	For every vector $x=(x_1,\ldots,x_k)^{\mathsf T}$, we have
	\begin{align*}
\left\|\left(I-\frac1kJ\right)x\right\|_2^2&=\sum_{r=1}^k\left(x_r-\frac1k\sum_{s=1}^k x_s\right)^2\\&=\sum_{r=1}^k x_r^2-\frac{2}{k}\sum_{r=1}^kx_r\sum_{s=1}^k x_s+\frac{1}{k^2}\sum_{r=1}^k\left(\sum_{s=1}^k x_s\right)^2\\&=\sum_{r=1}^k x_r^2-\frac{2}{k}\left(\sum_{r=1}^k x_r\right)^2+\frac{k}{k^2}\left(\sum_{r=1}^k x_r\right)^2\\&=\|x\|_2^2-\frac1k\left(\sum_{r=1}^k x_r\right)^2\le\|x\|_2^2.
\end{align*}
Applying this inequality to each column of $M_{BC}$ gives
\begin{equation*}
\left\|\left(I-\frac1kJ\right)M_{BC}\right\|_\mathrm{F}^2=\sum_{s=1}^k\left\|\left(I-\frac1kJ\right)(M_{BC})_{\ast s}\right\|_2^2\le\sum_{s=1}^k\left\|(M_{BC})_{\ast s}\right\|_2^2=\|M_{BC}\|_\mathrm{F}^2,
\end{equation*}
where $(M_{BC})_{\ast s}$ denotes the $s$th column of $M_{BC}$. Since $I-J/k$ is symmetric, applying the same inequality to the rows gives
\begin{equation*}
\left\|M_{BC}\left(I-\frac1kJ\right)\right\|_\mathrm{F}=\left\|\left(I-\frac1kJ\right)M_{BC}^\mathsf{T}\right\|_\mathrm{F}\le\|M_{BC}^\mathsf{T}\|_\mathrm{F}=\|M_{BC}\|_\mathrm{F}.
\end{equation*}
Therefore
\begin{equation*}
\left\|\left(I-\frac1kJ\right)M_{BC}\left(I-\frac1kJ\right)\right\|_\mathrm{F}\le\left\|M_{BC}\left(I-\frac1kJ\right)\right\|_\mathrm{F}\le\|M_{BC}\|_\mathrm{F}.
\end{equation*}
Finally, since $M_{BC}=(m_{rs})$ is the bipartite adjacency matrix
between $B$ and $C$, we have
\begin{equation*}
\|M_{BC}\|_\mathrm{F}^2=\sum_{r=1}^k\sum_{s=1}^k m_{rs}^2=\sum_{r=1}^k\sum_{s=1}^k m_{rs}=e_G(B,C).
\end{equation*}
Hence
\begin{equation}\label{eq:<eBC}
\left\|\left(I-\frac1kJ\right)M_{BC}\left(I-\frac1kJ\right)\right\|_\mathrm{F}^2\le e_G(B,C).
\end{equation}
Combining (\ref{eq:VarQBC}) and (\ref{eq:<eBC}), this completes the proof.
\end{proof}
Fix a block $B$.  Let $S:=\{x\in B:\text{ the color of }x\text{ belongs to }A\}$. Since the coloring of $B$ is a uniformly random bijection and $|A|=\ell$, the set $S$ is uniformly distributed over all $\ell$-subsets of $B$.
\begin{lemma}\label{lem:subset-representation}
For every block $B$, we have
\begin{equation*}\label{eq:subset-representation}
Q_B=e_G(S)-\frac{\ell}{k}\sum_{x\in S}d_{G[B]}(x)+\frac{\ell^2}{k^2}e_G(B).
\end{equation*}
\end{lemma}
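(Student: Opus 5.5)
This identity is a direct algebraic expansion of the same kind as Lemma~\ref{lem:decomposition}, but applied to the graph $G[B]$ and not to $G$. We write $X_v=\mathbf 1_{\{v\in S\}}$ for $v\in B$, so that $\xi_v=X_v-\ell/k$. Every edge counted in $Q_B$ has both endpoints in $B$. Expanding each product gives
\begin{equation*}
Q_B=\sum_{\substack{uv\in E(G)\\u,v\in B}}\left(X_uX_v-\frac{\ell}{k}(X_u+X_v)+\frac{\ell^2}{k^2}\right).
\end{equation*}
The three resulting sums are handled one at a time.

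\textbf{First term.} The product $X_uX_v$ is the indicator that both $u$ and $v$ lie in $S$. Hence the first sum is $e_G(S)$. Since $S\subseteq B$, every edge inside $S$ is an edge of $G[B]$, so nothing is lost here.

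\textbf{Middle term.} We regroup $\sum_{uv}(X_u+X_v)$ by vertices. Each $x\in B$ appears once for every edge of $G[B]$ incident to it, so
\begin{equation*}
\sum_{\substack{uv\in E(G)\\u,v\in B}}(X_u+X_v)=\sum_{x\in B}d_{G[B]}(x)X_x=\sum_{x\in S}d_{G[B]}(x).
\end{equation*}
This is the handshake identity restricted to $G[B]$.

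\textbf{Last term.} The constant $\ell^2/k^2$ is summed over the $e_G(B)$ edges of $G[B]$, which gives $\frac{\ell^2}{k^2}e_G(B)$.

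Combining the three pieces yields the claimed formula. There is no real obstacle. The only point needing care is to use degrees in $G[B]$ rather than in $G$ in the middle term. This is also where the formula differs in form from Lemma~\ref{lem:decomposition}: there the linear term was rewritten through $D_A$ and $m$, whereas here it stays as a sum over the random set $S$. That form is what will later be needed to estimate $\Var(Q_B)$ via the uniform distribution of $S$ over the $\ell$-subsets of $B$.
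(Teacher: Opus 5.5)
Your proposal is correct and matches the paper's proof step for step. You expand $\xi_u\xi_v$ with $X_x=\mathbf 1_{\{x\in S\}}$ on $B$ (which agrees with the paper's $X_x$ by the definition of $S$), and identify the three sums as $e_G(S)$, as $\sum_{x\in S}d_{G[B]}(x)$ via the handshake regrouping in $G[B]$, and as $\frac{\ell^2}{k^2}e_G(B)$.
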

\begin{proof}[\bf Proof]
	For every $x\in B$, we have $X_x=\mathbf 1_{\{x\in S\}}$ and $\xi_x=X_x-\frac{\ell}{k}$. Hence
	\begin{equation*}
Q_B=\sum_{xy\in E(G[B])}\xi_x\xi_y=\sum_{xy\in E(G[B])}X_xX_y-\frac{\ell}{k}\sum_{xy\in E(G[B])}(X_x+X_y)+\frac{\ell^2}{k^2}\sum_{xy\in E(G[B])}1.
	\end{equation*}
	Since $X_xX_y=1$ exactly when both $x$ and $y$ belong to $S$, we have $\sum_{xy\in E(G[B])}X_xX_y=e_G(S)$. Moreover, each $X_x$ occurs once for every edge of $G[B]$
	incident with $x$.  Therefore
	\begin{equation*}
	\sum_{xy\in E(G[B])}(X_x+X_y)=\sum_{x\in B}d_{G[B]}(x)X_x=\sum_{x\in S}d_{G[B]}(x).
	\end{equation*}
	Finally, $\sum_{xy\in E(G[B])}1=e_G(B)$. Substituting these identities gives
	\begin{equation*}
Q_B=e_G(S)-\frac{\ell}{k}\sum_{x\in S}d_{G[B]}(x)+\frac{\ell^2}{k^2}e_G(B).
	\end{equation*}
	This completes the proof.
\end{proof}
For $r\ge 1$, write $(x)_r:=x(x-1)\cdots(x-r+1)$. Since $S$ is a uniformly random $\ell$-subset of the $k$-vertex block $B$, for any distinct vertices $x_1,\ldots,x_r\in B$, we have
\begin{equation*}
\mathbb P(x_1,\ldots,x_r\in S)=\frac{\binom{k-r}{\ell-r}}{\binom{k}{\ell}}=\frac{(\ell)_r}{(k)_r},
\end{equation*}
with the probability interpreted as zero when $r>\ell$.
\begin{lemma}\label{lem:subset-moments}
Assume $k\ge4$. Then
\begin{equation}\label{eq:EeS}
\mathbb E_S\,e_G(S)=\frac{(\ell)_2}{(k)_2}e_G(B),
\end{equation}
and
\begin{equation}\label{eq:EdB}
\mathbb E_S\left(\sum_{x\in S}d_{G[B]}(x)\right)=\frac{2\ell}{k}e_G(B).
\end{equation}
Moreover,
\begin{equation}\label{eq:EeS2}
\mathbb E_S\,e_G(S)^2=\frac{(\ell)_4}{(k)_4}e_G(B)^2+\left(\frac{(\ell)_2}{(k)_2}-2\frac{(\ell)_3}{(k)_3}+\frac{(\ell)_4}{(k)_4}\right)e_G(B)+\left(\frac{(\ell)_3}{(k)_3}-\frac{(\ell)_4}{(k)_4}\right)\sum_{x\in B}d_{G[B]}(x)^2,
\end{equation}
\begin{equation}\label{eq:EeSdb}
\mathbb E_S\left(e_G(S)\sum_{x\in S}d_{G[B]}(x)\right)=2\frac{(\ell)_3}{(k)_3}e_G(B)^2+\left(\frac{(\ell)_2}{(k)_2}-\frac{(\ell)_3}{(k)_3}\right)\sum_{x\in B}d_{G[B]}(x)^2,
\end{equation}
and
\begin{equation}\label{eq:EdB2}
\mathbb E_S\left(\sum_{x\in S}d_{G[B]}(x)\right)^2=4\frac{(\ell)_2}{(k)_2}e_G(B)^2+\left(\frac{\ell}{k}-\frac{(\ell)_2}{(k)_2}\right)\sum_{x\in B}d_{G[B]}(x)^2.
\end{equation}
\end{lemma}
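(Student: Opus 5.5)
The plan is to expand each quantity as a sum of indicator products and to evaluate each product with the inclusion probability $\mathbb P(x_1,\ldots,x_r\in S)=(\ell)_r/(k)_r$ recorded just before the lemma. Write $H:=G[B]$, $e:=e_G(B)$ and $d_x:=d_{H}(x)$, so that $\sum_x d_x=2e$. We also use the standard count of ordered pairs of edges of $H$ sharing exactly one vertex,
\begin{equation*}
P:=\sum_{x\in B}d_x(d_x-1)=\sum_x d_x^2-2e .
\end{equation*}
The hypothesis $k\ge4$ is only there so that the falling factorials up to $(k)_4$ are nonzero. For $r>\ell$ the formula already gives probability zero, so no further case split is needed.

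The first moments are immediate. For \eqref{eq:EeS}, $e_G(S)=\sum_{xy\in E(H)}\mathbf 1_{x,y\in S}$, and each edge contributes $(\ell)_2/(k)_2$. For \eqref{eq:EdB}, $\sum_{x\in S}d_x=\sum_x d_x\mathbf 1_{x\in S}$ has expectation $\frac{\ell}{k}\cdot 2e$.

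For \eqref{eq:EdB2}, expand the square as $\sum_{x}d_x^2\mathbf 1_{x\in S}+\sum_{x\ne y}d_xd_y\mathbf 1_{x,y\in S}$. Using $\sum_{x\ne y}d_xd_y=4e^2-\sum d_x^2$, the expectation is
\begin{equation*}
\frac{\ell}{k}\sum d_x^2+\frac{(\ell)_2}{(k)_2}\Bigl(4e^2-\sum d_x^2\Bigr),
\end{equation*}
which is the stated formula.

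For \eqref{eq:EeSdb}, write $e_G(S)\sum_{z\in S}d_z=\sum_{xy\in E(H)}\sum_{z}d_z\mathbf 1_{x,y,z\in S}$. We split according to whether $z\in\{x,y\}$ or not. When $z\in\{x,y\}$ the contribution is $\frac{(\ell)_2}{(k)_2}\sum_{xy}(d_x+d_y)=\frac{(\ell)_2}{(k)_2}\sum d_x^2$. When $z\notin\{x,y\}$ the contribution is $\frac{(\ell)_3}{(k)_3}\sum_{xy}(2e-d_x-d_y)=\frac{(\ell)_3}{(k)_3}\bigl(2e^2\cdot2/2\cdot 1\bigr)$; more precisely it equals $\frac{(\ell)_3}{(k)_3}\bigl(4e^2-\sum d_x^2\bigr)$. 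This does not match the claimed coefficient $2(\ell)_3/(k)_3$ on $e^2$, so the arithmetic must be rechecked carefully against the statement. The identity $\sum_{xy\in E}(d_x+d_y)=\sum_x d_x^2$ is solid. The total $\sum_{xy}\sum_z d_z=e\cdot 2e=2e^2$ gives $2e^2-\sum d_x^2$ for the $z\notin\{x,y\}$ part, hence
\begin{equation*}
\frac{(\ell)_3}{(k)_3}\Bigl(2e^2-\sum d_x^2\Bigr),
\end{equation*}
which matches the statement. So the plan here is simply to do the bookkeeping carefully.

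For \eqref{eq:EeS2}, which I expect to be the main obstacle (it is only bookkeeping, but the error-prone kind), expand $e_G(S)^2$ as a sum over ordered pairs of edges $(f,g)$ and classify them by $|f\cup g|$:
\begin{itemize}
\item $f=g$: there are $e$ such pairs, each with probability $(\ell)_2/(k)_2$.
\item $|f\cap g|=1$: there are $P$ such pairs, each with probability $(\ell)_3/(k)_3$.
\item $f,g$ disjoint: there are $e^2-e-P$ such pairs, each with probability $(\ell)_4/(k)_4$.
\end{itemize}
Substituting $P=\sum d_x^2-2e$ and collecting the coefficients of $e^2$, $e$ and $\sum d_x^2$ gives $(\ell)_4/(k)_4$, then $(\ell)_2/(k)_2-2(\ell)_3/(k)_3+(\ell)_4/(k)_4$ (the term $-e$ from the disjoint count and $+2e$ from $-P$ combine to $+e\,(\ell)_4/(k)_4$), and finally $(\ell)_3/(k)_3-(\ell)_4/(k)_4$. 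These agree with \eqref{eq:EeS2}.
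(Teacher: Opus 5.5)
Your proposal is correct and follows the paper's proof essentially step for step. Like the paper, you expand each quantity into indicators evaluated with $\mathbb P(x_1,\ldots,x_r\in S)=(\ell)_r/(k)_r$, split the mixed moment by $z\in\{x,y\}$ versus $z\notin\{x,y\}$, and classify pairs of edges as equal, adjacent, or disjoint; the only difference is that you count ordered pairs via $\sum_x d_{G[B]}(x)\bigl(d_{G[B]}(x)-1\bigr)$ where the paper counts unordered pairs via $2\sum_x\binom{d_{G[B]}(x)}{2}$, which is the same thing. In a final write-up, delete the transient slip $4e_G(B)^2-\sum_x d_{G[B]}(x)^2$ in the mixed-moment paragraph and keep only the correct count $\sum_{xy\in E(G[B])}\bigl(2e_G(B)-d_{G[B]}(x)-d_{G[B]}(y)\bigr)=2e_G(B)^2-\sum_{x\in B}d_{G[B]}(x)^2$.
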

\begin{proof}[\bf Proof]
For a fixed edge $xy\in E(G[B])$, the edge is contained in $S$
exactly when both $x$ and $y$ belong to $S$. Hence
\begin{equation*}
\mathbb E_S\,e_G(S)=\sum_{xy\in E(G[B])}\mathbb P(x,y\in S)=\frac{\ell(\ell-1)}{k(k-1)}e_G(B),
\end{equation*}
which proves~(\ref{eq:EeS}). Similarly,
\begin{equation*}
\mathbb E_S\left(\sum_{x\in S}d_{G[B]}(x)\right)=\sum_{x\in B}d_{G[B]}(x)\,\mathbb P(x\in S)=\frac{\ell}{k}\sum_{x\in B}d_{G[B]}(x)=\frac{2\ell}{k}e_G(B),
\end{equation*}
which proves~(\ref{eq:EdB}). We next compute $\mathbb E_Se_G(S)^2$. Since $e_G(S)=\sum_{xy\in E(G[B])}\mathbf 1_{\{x,y\in S\}}$ expanding the square and separating pairs of distinct edges
according as they are adjacent or disjoint gives
\begin{align}\label{eq:EeS^2}
\mathbb E_S e_G(S)^2&=\sum_{xy\in E(G[B])}\mathbb P(x,y\in S)+2\sum_{\substack{\{e,f\}\subseteq E(G[B])\\ |e\cap f|=1}}\mathbb P(e\cup f\subseteq S)+2\sum_{\substack{\{e,f\}\subseteq E(G[B])\\ e\cap f=\emptyset}}\mathbb P(e\cup f\subseteq S)\notag\\&=\frac{(\ell)_2}{(k)_2}e_G(B)+
\frac{2(\ell)_3}{(k)_3}\sum_{x\in B}\binom{d_{G[B]}(x)}{2}+\frac{2(\ell)_4}{(k)_4}
\left(\binom{e_G(B)}{2}-\sum_{x\in B}\binom{d_{G[B]}(x)}{2}\right).
\end{align}
Then
\begin{equation}\label{eq:2sumbidx2}
2\sum_{x\in B}\binom{d_{G[B]}(x)}{2}=\sum_{x\in B}d_{G[B]}(x)\bigl(d_{G[B]}(x)-1\bigr)
=\sum_{x\in B}d_{G[B]}(x)^2-2e_G(B),
\end{equation}
and hence
\begin{align}\label{eq:2eb-dx}
2\left(\binom{e_G(B)}{2}-\sum_{x\in B}\binom{d_{G[B]}(x)}{2}\right)&=e_G(B)\bigl(e_G(B)-1\bigr)-
\left(\sum_{x\in B}d_{G[B]}(x)^2-2e_G(B)\right)\notag\\&=e_G(B)^2+e_G(B)-\sum_{x\in B}d_{G[B]}(x)^2.
\end{align}
Substituting~\eqref{eq:2sumbidx2} and~\eqref{eq:2eb-dx} into~(\ref{eq:EeS^2}), we obtain
\begin{align*}
&\qquad\mathbb E_S\,e_G(S)^2\\&=\frac{(\ell)_2}{(k)_2}e_G(B)+\frac{(\ell)_3}{(k)_3}\left(\sum_{x\in B}d_{G[B]}(x)^2-2e_G(B)\right)+\frac{(\ell)_4}{(k)_4}\left(e_G(B)^2+e_G(B)-\sum_{x\in B}d_{G[B]}(x)^2\right)\\&=\frac{(\ell)_4}{(k)_4}e_G(B)^2+\left(\frac{(\ell)_2}{(k)_2}-2\frac{(\ell)_3}{(k)_3}+\frac{(\ell)_4}{(k)_4}\right)e_G(B)+\left(\frac{(\ell)_3}{(k)_3}-\frac{(\ell)_4}{(k)_4}\right)\sum_{x\in B}d_{G[B]}(x)^2,
\end{align*}
which proves (\ref{eq:EeS2}). We next compute $\mathbb E_S\left(e_G(S)\sum_{x\in S}d_{G[B]}(x)\right)$. Since 
\begin{equation}\label{eq:eS,dB}
e_G(S)=\sum_{xy\in E(G[B])}\mathbf 1_{\{x,y\in S\}},\qquad\sum_{z\in S}d_{G[B]}(z)=\sum_{z\in B}d_{G[B]}(z)\mathbf 1_{\{z\in S\}},
\end{equation}
we have
\begin{align*}
&\qquad\mathbb E_S\left(e_G(S)\sum_{z\in S}d_{G[B]}(z)\right)\\&=\sum_{xy\in E(G[B])}\sum_{z\in B}
d_{G[B]}(z)\,\mathbb E\left(\mathbf 1_{\{x,y\in S\}}\mathbf 1_{\{z\in S\}}\right)\\&=
\sum_{xy\in E(G[B])}\left(\frac{(\ell)_2}{(k)_2}\bigl(d_{G[B]}(x)+d_{G[B]}(y)\bigr)+\frac{(\ell)_3}{(k)_3}\sum_{z\in B\setminus\{x,y\}}d_{G[B]}(z)\right)\\&=\sum_{xy\in E(G[B])}
\left(\frac{(\ell)_2}{(k)_2}\bigl(d_{G[B]}(x)+d_{G[B]}(y)\bigr)+\frac{(\ell)_3}{(k)_3}\bigl(2e_G(B)-d_{G[B]}(x)-d_{G[B]}(y)\bigr)\right)\\&=2\frac{(\ell)_3}{(k)_3}e_G(B)^2+\left(\frac{(\ell)_2}{(k)_2}-\frac{(\ell)_3}{(k)_3}\right)\sum_{xy\in E(G[B])}\bigl(d_{G[B]}(x)+d_{G[B]}(y)\bigr).
\end{align*}
Moreover,
\begin{equation*}
\sum_{xy\in E(G[B])}\left(d_{G[B]}(x)+d_{G[B]}(y)\right)=\sum_{x\in B}d_{G[B]}(x)\,d_{G[B]}(x)
=\sum_{x\in B}d_{G[B]}(x)^2.
\end{equation*}
Hence
\begin{equation*}
\mathbb E_S\left(e_G(S)\sum_{z\in S}d_{G[B]}(z)\right)=2\frac{(\ell)_3}{(k)_3}e_G(B)^2+\left(
\frac{(\ell)_2}{(k)_2}-\frac{(\ell)_3}{(k)_3}\right)\sum_{x\in B}d_{G[B]}(x)^2,
\end{equation*}
which proves (\ref{eq:EeSdb}). By (\ref{eq:eS,dB}), we have
\begin{equation*}
\left(\sum_{x\in S}d_{G[B]}(x)\right)^2=\sum_{x\in B}d_{G[B]}(x)^2\mathbf 1_{\{x\in S\}}
+2\sum_{\substack{x,y\in B\\x<y}}d_{G[B]}(x)d_{G[B]}(y)\mathbf 1_{\{x,y\in S\}}.
\end{equation*}
Taking expectations gives
\begin{equation*}
\mathbb E_S\left(\sum_{x\in S}d_{G[B]}(x)\right)^2=\frac{\ell}{k}\sum_{x\in B}d_{G[B]}(x)^2
+2\frac{\ell(\ell-1)}{k(k-1)}\sum_{x<y}d_{G[B]}(x)d_{G[B]}(y).
\end{equation*}
Since
\begin{equation*}
2\sum_{x<y}d_{G[B]}(x)d_{G[B]}(y)=\left(\sum_{x\in B}d_{G[B]}(x)\right)^2-\sum_{x\in B}d_{G[B]}(x)^2=4e_G(B)^2-\sum_{x\in B}d_{G[B]}(x)^2,
\end{equation*}
we obtain
\begin{equation*}
\mathbb E_S\left(\sum_{x\in S}d_{G[B]}(x)\right)^2=4\frac{\ell(\ell-1)}{k(k-1)}e_G(B)^2
+\left(\frac{\ell}{k}-\frac{\ell(\ell-1)}{k(k-1)}\right)\sum_{x\in B}d_{G[B]}(x)^2,
\end{equation*}
which proves~(\ref{eq:EdB2}). This completes the proof.
\end{proof}
The above results can now be substituted into the representation of $Q_B$ to obtain an exact expression for its variance.
\begin{lemma}\label{lem:exact-within}
Assume $k\ge4$, and put $t:=\ell(k-\ell)$. Then, for every block $B$,
	\begin{align}\label{eq:exact-within}
\operatorname{Var}(Q_B)=&\frac{t(t-k+1)}{k(k-1)(k-2)(k-3)}\,e_G(B)+\frac{2t\bigl(t(k^2+10k-12)-3k^2(k-1)\bigr)}{k^4(k-1)^2(k-2)(k-3)}\,e_G(B)^2\notag\\&+\frac{t\bigl(2k^2-t(k+6)\bigr)}{k^3(k-1)(k-2)(k-3)}\sum_{x\in B}d_{G[B]}(x)^2.
	\end{align}
\end{lemma}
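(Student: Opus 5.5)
We compute $\operatorname{Var}(Q_B)=\mathbb E_S Q_B^2-(\mathbb E_S Q_B)^2$ directly from the subset representation of Lemma~\ref{lem:subset-representation}. Write $e:=e_G(B)$, $\Sigma:=\sum_{x\in B}d_{G[B]}(x)^2$, $Y:=e_G(S)$ and $Z:=\sum_{x\in S}d_{G[B]}(x)$, so that $Q_B=Y-\frac{\ell}{k}Z+\frac{\ell^2}{k^2}e$. Since the constant term does not affect the variance,
\begin{equation*}
\operatorname{Var}(Q_B)=\operatorname{Var}(Y)-\frac{2\ell}{k}\operatorname{Cov}(Y,Z)+\frac{\ell^2}{k^2}\operatorname{Var}(Z).
\end{equation*}
All second moments and means needed here are supplied by Lemma~\ref{lem:subset-moments}: (\ref{eq:EeS}) and (\ref{eq:EdB}) give the means, while (\ref{eq:EeS2}), (\ref{eq:EeSdb}) and (\ref{eq:EdB2}) give the second moments. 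The hypothesis $k\ge4$ ensures that $(k)_4\neq0$, so all of these expressions are defined.

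Substituting, we group the result by the three monomials $e$, $e^2$ and $\Sigma$.

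\textbf{Coefficient of $e$.} Only $\operatorname{Var}(Y)$ contributes, namely $\frac{(\ell)_2}{(k)_2}-2\frac{(\ell)_3}{(k)_3}+\frac{(\ell)_4}{(k)_4}$.

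\textbf{Coefficient of $\Sigma$.} It is
\begin{equation*}
\Bigl(\tfrac{(\ell)_3}{(k)_3}-\tfrac{(\ell)_4}{(k)_4}\Bigr)-\tfrac{2\ell}{k}\Bigl(\tfrac{(\ell)_2}{(k)_2}-\tfrac{(\ell)_3}{(k)_3}\Bigr)+\tfrac{\ell^2}{k^2}\Bigl(\tfrac{\ell}{k}-\tfrac{(\ell)_2}{(k)_2}\Bigr).
\end{equation*}

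\textbf{Coefficient of $e^2$.} It is
\begin{equation*}
\tfrac{(\ell)_4}{(k)_4}-\tfrac{(\ell)_2^2}{(k)_2^2}-\tfrac{2\ell}{k}\Bigl(2\tfrac{(\ell)_3}{(k)_3}-\tfrac{2\ell}{k}\tfrac{(\ell)_2}{(k)_2}\Bigr)+\tfrac{\ell^2}{k^2}\Bigl(4\tfrac{(\ell)_2}{(k)_2}-\tfrac{4\ell^2}{k^2}\Bigr).
\end{equation*}

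It then remains to rewrite each coefficient as a rational function of $k$ and $t=\ell(k-\ell)$. The natural route is to put each coefficient over the common denominator $k^4(k-1)^2(k-2)(k-3)$, expand the numerator as a polynomial in $\ell$, and factor it. Each coefficient should vanish at $\ell=0$ and at $\ell=k$, because $Q_B\equiv0$ in those cases. Each should also be invariant under $\ell\mapsto k-\ell$, because replacing $A$ by its complement sends every $\xi_v$ to $-\xi_v$ and hence leaves $Q_B$ unchanged. Consequently every numerator is a polynomial in $t$ divisible by $t$. Since each numerator has degree at most $4$ in $\ell$, it is a polynomial of degree at most $2$ in $t$, and it is determined by its values at a couple of convenient $\ell$. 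For example, $\ell=1$ gives $t=k-1$, and at this value the coefficient of $e$ vanishes, which matches the factor $t-k+1$ in the statement.

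The main obstacle is the algebra for the $e^2$ coefficient, which mixes four different falling-factorial ratios with squares of means. To control it, I would first verify the symmetry and divisibility by $t$ as described. I would then determine the resulting quadratic in $t$ by evaluating at $\ell=1$ and $\ell=2$, and finally check the answer against the claimed form $2t\bigl(t(k^2+10k-12)-3k^2(k-1)\bigr)$. As an independent sanity check, taking $G[B]=K_k$ makes $Q_B$ deterministic, since $e_G(S)=\binom{\ell}{2}$ and $Z=\ell(k-1)$. Hence the whole right-hand side of~\eqref{eq:exact-within} must vanish when $e=\binom k2$ and $\Sigma=k(k-1)^2$.
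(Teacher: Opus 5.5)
Your proposal is correct and is essentially the paper's proof. You expand $\operatorname{Var}\bigl(e_G(S)-\frac{\ell}{k}\sum_{x\in S}d_{G[B]}(x)\bigr)$ with the moments of Lemma~\ref{lem:subset-moments} and get the same three grouped coefficients; your $e_G(B)^2$ coefficient expands to the paper's, which then simply asserts the simplification in terms of $t$.

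Your symmetry-plus-interpolation shortcut for that last step is sound, with two justifications to make explicit. First, $Q_B(A^c)=Q_B(A)$ gives symmetry under $\ell\mapsto k-\ell$ for each coefficient separately only because $e_G(B)$, $e_G(B)^2$ and $\sum_x d_{G[B]}(x)^2$ are linearly independent over graphs on $k\ge4$ vertices (witnessed by one edge, two disjoint edges, and a path on three vertices). Second, this symmetry, known at the $k+1\ge5$ integers $\ell=0,\dots,k$, extends to the degree-$4$ numerators as a polynomial identity. With these in place, your checks at $\ell=1,2$ and the $K_k$ test do confirm the stated formula.
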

\begin{proof}[\bf Proof]
	Fixed block $B$. By Lemma~\ref{lem:subset-representation}, we have
	\begin{align*}
\operatorname{Var}(Q_B)&=\operatorname{Var}\left(e_G(S)-\frac{\ell}{k}\sum_{x\in S}d_{G[B]}(x)\right)\\&=\mathbb E_S e_G(S)^2-\frac{2\ell}{k}\mathbb E_S\left(e_G(S)\sum_{x\in S}d_{G[B]}(x)\right)+\frac{\ell^2}{k^2}\mathbb E_S\left(\sum_{x\in S}d_{G[B]}(x)\right)^2\\&\qquad-\left(\mathbb E_S e_G(S)-\frac{\ell}{k}\mathbb E_S\left(\sum_{x\in S}d_{G[B]}(x)\right)\right)^2.
	\end{align*}
	Combining (\ref{eq:EeS}) and (\ref{eq:EdB}), we have
	\begin{equation*}
\left(\mathbb E_S e_G(S)-\frac{\ell}{k}\mathbb E_S\left(\sum_{x\in S}d_{G[B]}(x)\right)\right)^2
=\left(\frac{(\ell)_2}{(k)_2}-\frac{2\ell^2}{k^2}\right)^2e_G(B)^2.
	\end{equation*}
By Lemma~\ref{lem:subset-moments}, we have
\begin{align*}
&\qquad\operatorname{Var}(Q_B)\\&=\left(\frac{(\ell)_2}{(k)_2}-2\frac{(\ell)_3}{(k)_3}+\frac{(\ell)_4}{(k)_4}\right)e_G(B)+\left(\frac{(\ell)_4}{(k)_4}-\frac{4\ell}{k}\frac{(\ell)_3}{(k)_3}+\frac{4\ell^2}{k^2}\frac{(\ell)_2}{(k)_2}-\left(\frac{(\ell)_2}{(k)_2}-\frac{2\ell^2}{k^2}\right)^2\right)e_G(B)^2\\&\qquad+\left(\frac{(\ell)_3}{(k)_3}-\frac{(\ell)_4}{(k)_4}-\frac{2\ell}{k}\left(\frac{(\ell)_2}{(k)_2}-\frac{(\ell)_3}{(k)_3}\right)+\frac{\ell^2}{k^2}\left(\frac{\ell}{k}-\frac{(\ell)_2}{(k)_2}\right)\right)\sum_{x\in B}d_{G[B]}(x)^2\\&=\frac{t(t-k+1)}{k(k-1)(k-2)(k-3)}\,e_G(B)+\frac{2t\bigl(t(k^2+10k-12)-3k^2(k-1)\bigr)}{k^4(k-1)^2(k-2)(k-3)}\,e_G(B)^2\\&\qquad+\frac{t\bigl(2k^2-t(k+6)\bigr)}{k^3(k-1)(k-2)(k-3)}\sum_{x\in B}d_{G[B]}(x)^2,
\end{align*}
where $t:=\ell(k-\ell)$. This completes the proof.
\end{proof}
The exact variance still involves the sum of squared degrees inside the block. We will use the following standard bounds for this.
\begin{lemma}\label{lem:degree-square}
For every block $B$, we have
\begin{equation}\label{eq:degree-square}
\max\left\{2e_G(B),\frac{4e_G(B)^2}{k}\right\}\le\sum_{x\in B}d_{G[B]}(x)^2\le e_G(B)(k-2)+\frac{2e_G(B)^2}{k-1}.
\end{equation}
\end{lemma}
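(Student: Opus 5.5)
Write $d_x:=d_{G[B]}(x)$ and $e:=e_G(B)$, so that $\sum_{x\in B}d_x=2e$ and $0\le d_x\le k-1$ for every $x\in B$ (since $|B|=k$). We prove the four inequalities in \eqref{eq:degree-square} separately, using only elementary facts about degree sequences on $k$ vertices.

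\textbf{Lower bounds.} For the first, note that $d_x$ is a nonnegative integer, so $d_x^2\ge d_x$. Summing over $x\in B$ gives $\sum_x d_x^2\ge\sum_x d_x=2e$. For the second, apply Cauchy--Schwarz over the $k$ vertices of $B$:
\begin{equation*}
\sum_{x\in B}d_x^2\ge\frac1k\Bigl(\sum_{x\in B}d_x\Bigr)^2=\frac{4e^2}{k}.
\end{equation*}

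\textbf{Upper bound.} This is de Caen's inequality $\sum_x d_x^2\le e\bigl(\tfrac{2e}{k-1}+k-2\bigr)$ for a graph on $k$ vertices, and we reprove it directly. The starting point is the identity
\begin{equation*}
\sum_{x\in B}d_x^2=\sum_{xy\in E(G[B])}(d_x+d_y).
\end{equation*}
For an edge $xy$, let $c_{xy}$ be the number of vertices of $B\setminus\{x,y\}$ adjacent to neither $x$ nor $y$. Counting the neighbours of $x$ and $y$ other than each other, inclusion--exclusion over the $k-2$ other vertices gives
\begin{equation*}
d_x+d_y=k+|N(x)\cap N(y)|-c_{xy}.
\end{equation*}
Next, double count the pairs (edge $xy$, vertex $z\notin\{x,y\}$). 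Classify $z$ by how many of $x,y$ it is adjacent to. Each $z$ contributes $\binom{d_z}{2}$ edges inside its neighbourhood and $e-d_z-e_G(N(z))$ edges with no endpoint in $N(z)\cup\{z\}$. Summing these counts bounds $\sum_{xy}\bigl(|N(x)\cap N(y)|-c_{xy}\bigr)$ in terms of $\sum_z d_z^2$ and $e$. Rearranging should then give
\begin{equation*}
(k-1)\sum_x d_x^2\le 2e^2+(k-1)(k-2)e,
\end{equation*}
which is exactly the claimed bound $\sum_x d_x^2\le e(k-2)+\tfrac{2e^2}{k-1}$.

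\textbf{Main obstacle.} The delicate step is the de Caen upper bound, specifically getting this counting to close with the right constants. If the double-counting route proves awkward, we will instead cite de Caen's published inequality, since the lemma itself describes these as standard bounds.
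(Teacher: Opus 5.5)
Your two lower bounds are exactly the paper's argument ($d_x^2\ge d_x$ and Cauchy--Schwarz), and for the upper bound the paper simply cites de Caen. Your stated fallback is therefore precisely the paper's proof.

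The self-contained double-counting reproof you sketch, however, has a genuine gap and would not close. First, the counts you quote are wrong. The number of edges inside $N(z)$ is $e_G(N(z))$, which is only at most $\binom{d_z}{2}$. The number of edges with no endpoint in $N(z)\cup\{z\}$ is $e-\sum_{w\in N(z)}d_w+e_G(N(z))$, not $e-d_z-e_G(N(z))$; for a leaf $z$ of a star your expression gives $k-2$ instead of $0$.

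More fundamentally, the correct counts give $\sum_{xy}|N(x)\cap N(y)|=\sum_z e_G(N(z))$ and $\sum_{xy}c_{xy}=ke-\sum_x d_x^2+\sum_z e_G(N(z))$. Hence $\sum_{xy}\bigl(|N(x)\cap N(y)|-c_{xy}\bigr)=\sum_x d_x^2-ke$ exactly, and your starting identity collapses to $\sum_x d_x^2=\sum_x d_x^2$. Exact double counting only yields identities, so an inequality must be injected somewhere. The inequalities naturally available in your scheme are $c_{xy}\ge0$ and $e_G(N(z))\le\binom{d_z}{2}$. These only give $\sum_x d_x^2\le 2(k-1)e$, which is strictly weaker than de Caen's bound whenever $e<\binom{k}{2}$. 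So ``rearranging should then give'' the stated inequality is not justified.

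If you want a self-contained proof, a single sum of squares suffices. Put $a_{uv}:=\mathbf 1_{\{uv\in E(G)\}}$ and use the identities
$\sum_{u<v}a_{uv}^2=e$, $\sum_{u<v}(d_u+d_v)=2(k-1)e$, $\sum_{u<v}a_{uv}(d_u+d_v)=\sum_x d_x^2$, and $\sum_{u<v}(d_u+d_v)^2=(k-2)\sum_x d_x^2+4e^2$. A direct expansion then gives, for $k\ge3$,
\begin{equation*}
0\le\sum_{\{u,v\}\subseteq B}\Bigl(a_{uv}-\frac{d_u+d_v}{k-2}+\frac{2e}{(k-1)(k-2)}\Bigr)^2=e+\frac{2e^2}{(k-1)(k-2)}-\frac{1}{k-2}\sum_{x\in B}d_x^2,
\end{equation*}
which is exactly $\sum_x d_x^2\le e(k-2)+2e^2/(k-1)$. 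Spectrally, this says the vertex--edge incidence map of $K_k$ has squared norm $k-2$ on vectors orthogonal to the all-ones vector, applied to the centred edge indicator of $G[B]$.
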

\begin{proof}[\bf Proof]
	Since $d_{G[B]}(x)$ is a nonnegative integer, we have $d_{G[B]}(x)^2\ge d_{G[B]}(x)$ for every $x\in B$.  Hence,
	\begin{equation}\label{eq:dB=2e}
	\sum_{x\in B}d_{G[B]}(x)^2\ge\sum_{x\in B}d_{G[B]}(x)\\=2e_G(B).
	\end{equation}
	On the other hand, the Cauchy--Schwarz inequality gives
	\begin{equation*}
\left(\sum_{x\in B}d_{G[B]}(x)\right)^2\le\left(\sum_{x\in B}1^2\right)\left(\sum_{x\in B}d_{G[B]}(x)^2\right)=k\sum_{x\in B}d_{G[B]}(x)^2.
	\end{equation*}
Using (\ref{eq:dB=2e}), we obtain
\begin{equation}\label{eq:db>4e/k}
\sum_{x\in B}d_{G[B]}(x)^2\ge\frac{4e_G(B)^2}{k}.
\end{equation}
	Combining (\ref{eq:dB=2e}) and (\ref{eq:db>4e/k}), we obtain
	\begin{equation*}
	\sum_{x\in B}d_{G[B]}(x)^2\ge\max\left\{2e_G(B),\frac{4e_G(B)^2}{k}\right\}.
	\end{equation*}
For the upper bound, applying de Caen's~\cite{deCaen1998} inequality to the $k$-vertex simple graph $G[B]$ gives
	\begin{equation*}
\sum_{x\in B}d_{G[B]}(x)^2\le e_G(B)\left(k-2+\frac{2e_G(B)}{k-1}\right)=e_G(B)(k-2)+\frac{2e_G(B)^2}{k-1}.
	\end{equation*}
	This proves~\eqref{eq:degree-square}.
\end{proof}
Combining the exact formula of Lemma~\ref{lem:exact-within} with the bounds in Lemma~\ref{lem:degree-square}, we now derive a uniform estimate for the contribution of a single block.
\begin{lemma}\label{lem:within-linear}
For every block $B$, we have
\begin{equation}\label{eq:within-linear}
\operatorname{Var}(Q_B)\le\left(\frac{\ell(k-\ell)}{k(k-1)}\right)^2e_G(B).
\end{equation}
\end{lemma}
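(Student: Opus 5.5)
The plan is to bound the exact formula of Lemma~\ref{lem:exact-within} using the degree-square bounds of Lemma~\ref{lem:degree-square}, and to treat $k=3$ separately, since Lemma~\ref{lem:exact-within} needs $k\ge4$. Write $e:=e_G(B)$, $t:=\ell(k-\ell)$, $c:=\bigl(t/(k(k-1))\bigr)^2$ and $\Sigma:=\sum_{x\in B}d_{G[B]}(x)^2$.

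\emph{The case $k=3$.} Here $\ell\in\{1,2\}$, so $S$ is a uniformly random vertex or a uniformly random pair of $B$. By symmetry ($S\mapsto B\setminus S$) both cases give the same variance. Lemma~\ref{lem:subset-representation} expresses $Q_B$ through $e_G(S)$ and $\sum_{x\in S}d_{G[B]}(x)$. Since $G[B]$ has only $3$ vertices, there are only four isomorphism types for $e\in\{0,1,2,3\}$. I would compute $\Var(Q_B)$ directly in each case and compare it with $c\,e=e/9$.

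\emph{The case $k\ge4$: reduction to one quadratic.} The coefficient of $\Sigma$ in \eqref{eq:exact-within} has the sign of $2k^2-t(k+6)$, and this sign can go either way.
\begin{itemize}
\item If $2k^2-t(k+6)\ge0$, I replace $\Sigma$ by de Caen's upper bound $e(k-2)+2e^2/(k-1)$.
\item If $2k^2-t(k+6)<0$, I replace $\Sigma$ by a lower bound from Lemma~\ref{lem:degree-square}, either $4e^2/k$ or $2e$.
\end{itemize}
In each case this gives $\Var(Q_B)\le f(e):=\alpha e+\beta e^2$ with explicit rational functions $\alpha,\beta$ of $k$ and $t$. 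The goal is $f(e)\le c\,e$ on $0\le e\le K:=\binom k2$.

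\emph{Why it suffices to check the endpoints.} The key observation is that every bound I use is tight at the complete graph $K_k$. De Caen's bound is an equality there. Also $4K^2/k=k(k-1)^2=\Sigma(K_k)$, since $K_k$ is regular. For $e=K$ we have $G[B]=K_k$, so $e_G(S)$ and $\sum_{x\in S}d_{G[B]}(x)$ are deterministic and $\Var(Q_B)=0$. Hence $f(K)=0$ whenever the bound used is tight at $K_k$, and of course $f(0)=0$.
\begin{itemize}
\item If $\beta\ge0$, then $f$ is convex. Since $f(0)=0$ and $f(K)=0\le cK$, the chord bound gives $f(e)\le (f(K)/K)\,e\le ce$ on $[0,K]$.
\item If $\beta<0$, then $f(e)=e(\alpha+\beta e)$ and I need $\alpha+\beta e\le c$. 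Because $f(K)=0$ gives $\alpha+\beta K=0$, this only has to be checked at the smallest relevant $e$, and it reduces to a single inequality on the slope $\alpha$. In the negative-coefficient case, if $4e^2/k$ is too weak for small $e$, I would use $\max\{2e,4e^2/k\}$. I would switch to the $2e$ bound below $e=k/2$, so the resulting piecewise quadratic still vanishes at $0$ and at $K$.
\end{itemize}

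\emph{Main obstacle.} The hard step is the sign and coefficient algebra: showing $\alpha\le c$ (or the convexity/chord condition) uniformly over $1\le\ell\le k-1$, using only $k\ge4$ and $k-1\le t\le k^2/4$. I would first simplify after multiplying through by $k^4(k-1)^2(k-2)(k-3)/t$, which leaves polynomial inequalities in $k$ and $t$. Each is linear or quadratic in $t$, so it is enough to verify it at $t=k-1$ and at $t=k^2/4$, which can be done by hand.
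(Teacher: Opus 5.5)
Your plan is correct and follows essentially the same route as the paper. Both treat $k=3$ by direct enumeration, using that $\ell$ and $k-\ell$ give the same $Q_B$, and for $k\ge4$ split on the sign of $2k^2-t(k+6)$, using de Caen's bound in the nonnegative case and otherwise $2e$ for $e\le k/2$ and $4e^2/k$ for $e\ge k/2$, reducing everything to endpoint checks under $k-1\le t\le k^2/4$. What you add is the observation that the bounds are tight at $K_k$, so the quadratic vanishes at $e=\binom k2$, which neatly explains why the paper's coefficient conditions reduce to a single inequality at the smallest relevant $e$ of each piece. One correction to the plan's justification: a quadratic condition that holds at $t=k-1$ and $t=k^2/4$ need not hold in between, so the endpoint check is only valid because, after dividing by $t$, every resulting condition is in fact linear in $t$.
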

\begin{proof}[\bf Proof]
We first consider the case $k=3$. Then $\ell\in\{1,2\}$. We show explicitly that the two values of $\ell$ are equivalent. 

Let $A\subseteq[3]$ with $|A|=1$, and let $A^c=[3]\setminus A$,
so that $|A^c|=2$.  For $x\in B$, write $X_x^A:=\mathbf 1_{\{\text{the color of }x\text{ belongs to }A\}}$ and $X_x^{A^c}:=\mathbf 1_{\{\text{the color of }x\text{ belongs to }A^c\}}$. Since every color belongs to exactly one of $A$ and $A^c$, we have $X_x^{A^c}=1-X_x^A$. For the one-color set $A$, we have $\xi_x^A=X_x^A-1/3$ whereas for its two-color complement $A^c$,
\begin{equation*}
\xi_x^{A^c}=X_x^{A^c}-\frac23=1-X_x^A-\frac23=-\left(X_x^A-\frac13\right)=-\xi_x^A.
\end{equation*}
Hence, for every edge $xy\in E(G[B])$, we have $\xi_x^{A^c}\xi_y^{A^c}=(-\xi_x^A)(-\xi_y^A)=\xi_x^A\xi_y^A$. Therefore
\begin{equation*}
Q_B(A^c)=\sum_{xy\in E(G[B])}\xi_x^{A^c}\xi_y^{A^c}=\sum_{xy\in E(G[B])}\xi_x^A\xi_y^A=Q_B(A).
\end{equation*}
Consequently, $\operatorname{Var}(Q_B(A^c))=\operatorname{Var}(Q_B(A))$. Thus the cases $\ell=1$ and $\ell=2$ are identical, and it is enough to treat $\ell=1$. 

Since $S$ is a uniformly random one-element subset of $B$, exactly one vertex of $B$ belongs to $S$.  For every $x\in B$, we have
\begin{equation}\label{eq:epslionx}
\xi_x=X_x-\frac13=\begin{cases}\dfrac23, & x\in S,\\[2mm]-\dfrac13, & x\notin S.
\end{cases}
\end{equation}
Since $|B|=3$, we have $e_G(B)\in\{0,1,2,3\}$.

\item \textbf{Case 1.} If $e_G(B)=0$, then $E(G[B])=\varnothing$, and therefore
\begin{equation*}
Q_B=\sum_{xy\in E(G[B])}\xi_x\xi_y=0.
\end{equation*}
Hence $\operatorname{Var}(Q_B)=0$.

\item  \textbf{Case 2.} If $e_G(B)=3$, then $G[B]=K_3$. For every coloring outcome, exactly one vertex belongs to $S$. By (\ref{eq:epslionx}), we have
\begin{equation*}
Q_B=\sum_{xy\in E(G[B])}\xi_x\xi_y=\left(\frac23\right)\left(-\frac13\right)+\left(\frac23\right)\left(-\frac13\right)+\left(-\frac13\right)\left(-\frac13\right)=-\frac13.
\end{equation*} 
Thus $Q_B$ is constant, and hence $\operatorname{Var}(Q_B)=0$.

\item \textbf{Case 3.} $e_G(B)=1$. Let $xy$ be the unique edge of $G[B]$, and let $z$ be the unique vertex in $B\setminus\{x,y\}$. If $S=\{x\}$ or $S=\{y\}$, then $\xi_x=2/3$ and $\xi_y=\xi_z=-1/3$. Hence $Q_B=\xi_x\xi_y=-2/9$. If $S=\{z\}$, then $\xi_x=\xi_y=-1/3$ and $\xi_z=2/3$. Therefore $Q_B=\xi_x\xi_y=1/9$. Since each vertex is selected with probability $1/3$, we have $\mathbb E Q_B=-1/9$ and $\mathbb E Q_B^2=1/27$. Then $\operatorname{Var}(Q_B)=2/{81}$.

\item  \textbf{Case 4.} If $e_G(B)=2$, then $G[B]$ is a path on three vertices.  Write its edges as $xy$ and $yz$, so that $y$ is the middle vertex. If $S=\{y\}$, then $\xi_y=2/3$ and $\xi_x=\xi_z=-1/3$. Hence $Q_B=	\xi_x\xi_y+\xi_y\xi_z=-4/9$. If $S=\{x\}$ or $S=\{z\}$, then $\xi_x=2/3$ and $\xi_y=\xi_z=-1/3$. Therefore $Q_B=\xi_x\xi_y+\xi_y\xi_z=-1/9$. Therefore $\mathbb E Q_B=-2/9$ and $\mathbb E Q_B^2=2/27$. Hence $\operatorname{Var}(Q_B)=\mathbb E Q_B^2-(\mathbb E Q_B)^2=2/81$.

Combining the four cases, we obtain
\begin{equation*}
\operatorname{Var}(Q_B)\le\frac19\,e_G(B)=\left(\frac{\ell(3-\ell)}{3(3-1)}\right)^2e_G(B).
\end{equation*}
This proves the assertion for $k=3$. Assume that $k\ge4$. We shall repeatedly use $k-1\le \ell(k-\ell)\le {k^2}/{4}$, which follows from $1\le\ell\le k-1$.
\begin{claim}\label{claimk>41}
If $2k^2-\ell(k-\ell)(k+6)\ge0$, then (\ref{eq:within-linear}) holds.
\end{claim}
\begin{proof}
The coefficient of $\sum_{x\in B}d_{G[B]}(x)^2$ in (\ref{eq:exact-within}) is nonnegative.  Hence we may apply the upper bound from Lemma~\ref{lem:degree-square}.  Substituting it into (\ref{eq:exact-within}) and simplifying gives
\begin{align*}
&\qquad\left(\frac{\ell(k-\ell)}{k(k-1)}\right)^2 e_G(B)-\operatorname{Var}(Q_B)\\&\ge
e_G(B)\left(\frac{\ell(k-\ell)\left(\ell(k-\ell)(k^2+2k-4)-k^2(k-1)\right)}{k^3(k-2)(k-1)^2}+\frac{2\ell(k-\ell)\left(k^2-4\ell(k-\ell)\right)}{k^4(k-2)(k-1)^2}e_G(B)\right).
\end{align*}
By $k^2-4\ell(k-\ell)\ge0$ and $\ell(k-\ell)(k^2+2k-4)-k^2(k-1)\geq(k-1)(2k-4)\geq0$, we obtain (\ref{eq:within-linear}). This proves the claim.
\end{proof}
\begin{claim}\label{claimk>42}
	If $2k^2-\ell(k-\ell)(k+6)<0$ and $0\le e_G(B)\le {k}/{2}$, then~(\ref{eq:within-linear}) holds.
\end{claim}
\begin{proof}
	The coefficient of $\sum_{x\in B}d_{G[B]}(x)^2$ in~(\ref{eq:exact-within}) is negative.  Hence we may apply the lower bound from Lemma~\ref{lem:degree-square}. Substituting this bound into~(\ref{eq:exact-within}) and simplifying, 
	\begin{align*}
\left(\frac{\ell(k-\ell)}{k(k-1)}\right)^2 e_G(B)-\operatorname{Var}(Q_B)\ge e_G(B)f(e_G(B)),
	\end{align*}
	where
	\begin{align*}
	f(e_G(B))&=\frac{\ell(k-\ell)\left(k^4-6k^3+5k^2+\ell(k-\ell)(-2k^2+16k-12)\right)}{k^3(k-3)(k-2)(k-1)^2}\\&\qquad+\frac{2\ell(k-\ell)\left(3k^2(k-1)-\ell(k-\ell)(k^2+10k-12)\right)}{k^4(k-3)(k-2)(k-1)^2}e_G(B).
	\end{align*}
Since $f(e_G(B))$ is linear in $e_G(B)$ and $0\le e_G(B)\le {k}/{2}$, it is enough to verify $f(0)\ge0$ and $f\left({k}/{2}\right)\ge0$. Since $e_G(B)=0$ and $k-1\leq \ell(k-\ell)\leq k^2/4$, we have
\begin{align*}
f(0)&=\frac{\ell(k-\ell)\left(k^4-6k^3+5k^2+\ell(k-\ell)(-2k^2+16k-12)\right)}{k^3(k-3)(k-2)(k-1)^2}\\&\ge\frac{\ell(k-\ell)}{k^3(k-3)(k-2)(k-1)^2}\cdot\min\left\{(k-3)(k-2)^2(k-1),\frac12 k^2(k-2)^2\right\}\ge0.
\end{align*}
Since $e_G(B)=k/2$ and $\ell(k-\ell)\leq k^2/4$, we have
\begin{equation*}
f\left(\frac{k}{2}\right)=\frac{\ell(k-\ell)\left(k^2-k-3\ell(k-\ell)\right)}{k^2(k-3)(k-1)^2}
\ge\frac{\ell(k-\ell)\left(k^2-k-\frac{3k^2}{4}\right)}{k^2(k-3)(k-1)^2}\ge 0.
\end{equation*}
Thus $f(e_G(B))\ge0$ for every $0\le e_G(B)\le {k}/{2}$ and $e_G(B)\ge0$, we have $e_G(B)f(e_G(B))\ge0$. This proves the claim.
\end{proof}
\begin{claim}\label{claimk>43}
	If $2k^2-\ell(k-\ell)(k+6)<0$ and $e_G(B)\ge {k}/{2}$, then~(\ref{eq:within-linear}) holds.
\end{claim}
\begin{proof}
	The coefficient of $\sum_{x\in B}d_{G[B]}(x)^2$ in~(\ref{eq:exact-within}) is negative. Hence we may apply the lower bound from Lemma~\ref{lem:degree-square}. Substituting this bound into~(\ref{eq:exact-within}) and simplifying gives
	\begin{equation*}
\left(\frac{\ell(k-\ell)}{k(k-1)}\right)^2 e_G(B)-\operatorname{Var}(Q_B)\ge e_G(B)\,g(e_G(B)),
	\end{equation*}
where
\begin{equation*}
g(e_G(B))=\frac{\ell(k-\ell)\left(k^3-2k^2+k+\ell(k-\ell)(6-4k)\right)}{k^2(k-3)(k-2)(k-1)^2}+
\frac{2\ell(k-\ell)\left(\ell(k-\ell)-k+1\right)}{k^2(k-3)(k-2)(k-1)^2}e_G(B).
\end{equation*}
	Since $\ell(k-\ell)\ge k-1$, the coefficient of $e_G(B)$ in $g(e_G(B))$ is nonnegative.
	Hence $g(e_G(B))$ is nondecreasing in $e_G(B)$. Therefore, for $e_G(B)\ge{k}/{2}$, it is enough to check $g(k/2)$. We have
	\begin{equation*}
g\left(\frac{k}{2}\right)=\frac{\ell(k-\ell)\left(k^2-k-3\ell(k-\ell)\right)}{k^2(k-3)(k-1)^2}
\ge\frac{\ell(k-\ell)\left(k^2-k-\frac{3k^2}{4}\right)}{k^2(k-3)(k-1)^2}\ge0.
	\end{equation*}
	Thus $g(e_G(B))\ge g\left({k}/{2}\right)\ge0$. Since $e_G(B)\ge0$, we have $e_G(B)\,g(e_G(B))\ge0$. This proves the claim.
\end{proof}
The case $k=3$ together with  Claims~\ref{claimk>41},~\ref{claimk>42} and \ref{claimk>43}, we prove the lemma.
\end{proof}
We now have bounds for both the within-block terms $Q_B$ and the cross-block terms $Q_{BC}$.  Together with the variance decomposition from Lemma~\ref{lem:orthogonality}, these estimates give the desired uniform
bound for $Q_A$.
\begin{theorem}\label{thm:variance}
For every $A\in\binom{[k]}{\ell}$, we have
\begin{equation*}\label{eq:uniform-variance}
\operatorname{Var}(Q_A)\le\left(\frac{\ell(k-\ell)}{k(k-1)}\right)^2 m.
\end{equation*}
\end{theorem}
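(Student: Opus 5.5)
The bound follows by combining the variance decomposition with the two block estimates. I will start from Lemma~\ref{lem:orthogonality}, which gives
\begin{equation*}
\operatorname{Var}(Q_A)=\sum_{B\in\calB}\operatorname{Var}(Q_B)+\sum_{B<C}\operatorname{Var}(Q_{BC}).
\end{equation*}
For each block $B$, Lemma~\ref{lem:within-linear} gives $\operatorname{Var}(Q_B)\le c^2 e_G(B)$, where $c:=\ell(k-\ell)/(k(k-1))$. For each pair of distinct blocks, Lemma~\ref{lem:cross-variance} gives $\operatorname{Var}(Q_{BC})\le c^2 e_G(B,C)$. Summing these bounds yields
\begin{equation*}
\operatorname{Var}(Q_A)\le c^2\Bigl(\sum_{B\in\calB}e_G(B)+\sum_{B<C}e_G(B,C)\Bigr).
\end{equation*}

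Next I will identify the bracket with $m$. Every edge of $G$ (equivalently of $\widehat G$, since the added vertices are isolated) either has both endpoints in a single block or joins two distinct blocks. These cases are exclusive, and in the second case the unordered pair $\{B,C\}$ is determined uniquely. This is the same edge partition that underlies~\eqref{eq:block-decomp}, so the bracket equals exactly $m$. Hence $\operatorname{Var}(Q_A)\le c^2 m$.

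None of this is difficult: the substantive work, namely the orthogonality of the summands and the per-block bounds (particularly the within-block case analysis of Lemma~\ref{lem:within-linear}), has already been carried out. The only point to check is that Lemma~\ref{lem:within-linear} covers every $k\ge3$ and every $1\le\ell\le k-1$. It does: $k=3$ is treated explicitly, and $k\ge4$ is handled by the three claims. The bound is also independent of $A$, so it holds uniformly for every $A\in\binom{[k]}{\ell}$, as the statement requires.
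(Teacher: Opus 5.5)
Your proposal is correct and follows the paper's proof exactly. You combine the orthogonal decomposition of Lemma~\ref{lem:orthogonality} with the per-block bounds of Lemmas~\ref{lem:within-linear} and~\ref{lem:cross-variance}, and then identify $\sum_{B}e_G(B)+\sum_{B<C}e_G(B,C)$ with $m$, since each edge lies inside one block or joins a unique pair of blocks.
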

\begin{proof}[\bf Proof]
By Lemmas \ref{lem:orthogonality},~\ref{lem:cross-variance}~and~\ref{lem:within-linear}, we have
\begin{align*}
\operatorname{Var}(Q_A)&=\sum_{B\in\mathcal B}\operatorname{Var}(Q_B)+\sum_{B<C}\operatorname{Var}(Q_{BC})\\&\le\left(\frac{\ell(k-\ell)}{k(k-1)}\right)^2\left(\sum_{B\in\mathcal B}e_G(B)+\sum_{B<C}e_G(B,C)\right)\\&=\left(\frac{\ell(k-\ell)}{k(k-1)}\right)^2 m.
\end{align*}
Indeed, every edge of $G$ either has both endpoints in one block or joins a unique pair of distinct blocks, and hence is counted exactly once in the two sums above.
\end{proof}

\section{Proof of Main Theorem}\label{sec:Proof of Main Theorem}
In this section, we obtain a simultaneous bound for all $Q_A$ and use it to prove Theorem~\ref{thm:intro-general} and Corollary~\ref{cor:intro-BS}.
\begin{lemma}[\textbf{Cantelli} \cite{Cantelli1928,Savage1961}]\label{lem:cantelli}
Let $Y$ be a random variable with $\mathbb E Y=0$ and $\operatorname{Var}(Y)=\sigma^2$. Then, for every $u>0$,
\begin{equation*}
\mathbb P(Y\ge u)\le\frac{\sigma^2}{\sigma^2+u^2}.
\end{equation*}
\end{lemma}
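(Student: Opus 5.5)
The statement to prove is Cantelli's one-sided inequality. I will use the standard shift-and-square argument: for every $s\ge0$, the event $\{Y\ge u\}$ is contained in $\{Y+s\ge u+s\}$. Since $u+s>0$, on this event we have $(Y+s)^2\ge (u+s)^2$. Markov's inequality applied to the nonnegative variable $(Y+s)^2$ then gives
\begin{equation*}
\mathbb P(Y\ge u)\le\mathbb P\bigl((Y+s)^2\ge (u+s)^2\bigr)\le\frac{\mathbb E(Y+s)^2}{(u+s)^2}.
\end{equation*}
Because $\mathbb E Y=0$, the numerator equals $\sigma^2+s^2$. So for every $s\ge0$,
\begin{equation*}
\mathbb P(Y\ge u)\le\frac{\sigma^2+s^2}{(u+s)^2}.
\end{equation*}

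Next I optimize over $s$. Differentiating $h(s)=(\sigma^2+s^2)/(u+s)^2$ gives $h'(s)\propto s(u+s)-(\sigma^2+s^2)=su-\sigma^2$, so the minimum is at $s=\sigma^2/u\ge0$. Substituting,
\begin{equation*}
\sigma^2+s^2=\frac{\sigma^2(u^2+\sigma^2)}{u^2},\qquad (u+s)^2=\frac{(u^2+\sigma^2)^2}{u^2},
\end{equation*}
and the ratio is $\sigma^2/(\sigma^2+u^2)$, which is the desired bound.

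The degenerate case $\sigma=0$ needs no separate treatment. There $s=0$, and the bound $\sigma^2/u^2=0$ is correct because $Y=0$ almost surely. No other step poses a real obstacle; the only point requiring care is that $s$ must be nonnegative, so that $u+s>0$ and squaring preserves the inequality on the event. This holds since $s=\sigma^2/u\ge0$.
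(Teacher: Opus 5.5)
Your proof is correct. For every $s\ge0$, Markov's inequality applied to $(Y+s)^2$ gives $\mathbb{P}(Y\ge u)\le(\sigma^2+s^2)/(u+s)^2$. Choosing $s=\sigma^2/u$ then yields exactly $\sigma^2/(\sigma^2+u^2)$, and the case $\sigma=0$ is indeed covered by the same computation.

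The paper gives no proof of this lemma; it simply quotes it from Cantelli and Savage. Your shift-and-square argument is the standard proof of that classical inequality, so there is no competing route to compare. Two small remarks:

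\textbf{The derivative step is optional.} It only motivates the choice of $s$. For the proof it suffices to substitute $s=\sigma^2/u$ and note that $s\ge0$, which you did.

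\textbf{A weaker hypothesis suffices.} If you fix $s=\sigma^2/u$ in advance, your argument works verbatim when you only assume $\mathbb{E}Y=0$ and $\mathbb{E}Y^2\le\sigma^2$, since then $\mathbb{E}(Y+s)^2\le\sigma^2+s^2$. This is precisely the form the paper actually needs in Lemma~\ref{lem:simultaneous}, where Theorem~\ref{thm:variance} supplies only an upper bound on $\operatorname{Var}(Q_A)$. Equivalently, you can observe that $\sigma^2/(\sigma^2+u^2)$ is increasing in $\sigma^2$.
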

We now apply Cantelli's inequality to $Q_A-\mathbb E Q_A$ and use the union bound to control all choices of $A$ simultaneously.

\begin{lemma}\label{lem:simultaneous}
There exists a block-coloring outcome such that, for every $A\in\binom{[k]}{\ell}$, we have
\begin{equation*}\label{eq:simultaneous}
Q_A\le\frac{\ell(k-\ell)}{k(k-1)}\sqrt{\left(\binom{k}{\ell}-1\right)m}.
\end{equation*}
\end{lemma}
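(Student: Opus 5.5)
Apply Cantelli's inequality (Lemma~\ref{lem:cantelli}) to each centered variable $Y_A:=Q_A-\mathbb E Q_A$, then take a union bound over the $M:=\binom{k}{\ell}$ choices of $A$. Set $\sigma:=\frac{\ell(k-\ell)}{k(k-1)}\sqrt m$ and $u:=\sigma\sqrt{M-1}$. If $m=0$, every $Q_A$ is identically zero and there is nothing to prove, so assume $m\ge1$. The case $M=1$ cannot occur, since $1\le\ell\le k-1$ forces $M\ge k\ge3$. Hence $u>0$.

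By Theorem~\ref{thm:variance}, $\sigma_A^2:=\Var(Q_A)\le\sigma^2$. Since $x\mapsto x/(x+u^2)$ is nondecreasing in $x\ge0$, Cantelli gives
\[
\mathbb P(Y_A\ge u)\le\frac{\sigma_A^2}{\sigma_A^2+u^2}\le\frac{\sigma^2}{\sigma^2+(M-1)\sigma^2}=\frac1M .
\]
If $\sigma_A=0$, then $Y_A\equiv0$ and the probability is zero. By Lemma~\ref{lem:block-second}, $\mathbb E Q_A\le0$, so the event $\{Q_A\ge u\}$ is contained in $\{Y_A\ge u\}$. Therefore $\mathbb P(Q_A\ge u)\le 1/M$ for every $A$.

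The delicate step is the union bound. It only gives $\mathbb P(\exists A:\,Q_A\ge u)\le M\cdot\frac1M=1$, which is not strictly less than $1$. To close the gap I would use the fact that the probability space is finite and uniform over the block colorings.

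The cleanest route is to bound $\sum_A\mathbf 1_{\{Q_A> u\}}$ with strict inequality, since the lemma only requires $Q_A\le u$. I would show that some $A$ satisfies $\mathbb P(Q_A>u)<1/M$ strictly. Equality in Cantelli would require $Y_A$ to be two-valued, taking the values $u$ and $-\sigma_A^2/u$. Equality would also require $\sigma_A=\sigma$ and $\mathbb E Q_A=0$.

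Here $\mathbb E Q_A=0$ forces $m_0=0$. In that case $Q_A$ is a sum of cross-block terms, and by the symmetry $A\mapsto$ any other $\ell$-set, the $Q_A$ all have the same distribution. If all of them had exactly probability $1/M$ of reaching $u$ and the events $\{Q_A\ge u\}$ were disjoint and covered the whole space, then at every outcome exactly one $A$ would have $Q_A=u$. Every other $A$ would have $Q_A=-\sigma^2/u$. Summing over $A$ would then give a fixed nonzero value for $\sum_A Q_A$.

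However, $\sum_A Q_A$ is the sum over edges of $\sum_A\xi_u\xi_v$, which is a deterministic function of whether $u$ and $v$ share a color. By Lemma~\ref{lem:block-second}, this sum has expectation $\sum_A\mathbb E Q_A=0$ when $m_0=0$. This contradicts the sum being identically $(u-(M-1)\sigma^2/u)=0$. Since $u^2=(M-1)\sigma^2$, that constant is actually $0$, so I would instead compare second moments to rule out the equality case.

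I expect this strictness/equality analysis to be the main obstacle. The fallback is to use the non-strict conclusion in the form "with probability at least $0$" combined with the finiteness argument: the expected number of violating $A$ is at most $1$. Either some outcome has no violations, or every outcome has exactly one violation with all Cantelli inequalities tight. In the tight case I would derive a contradiction from the two-point structure, for instance via the per-edge identity $\sum_A\xi_u^A\xi_v^A$.
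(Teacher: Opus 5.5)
\paragraph{Review.} There is a genuine gap, in the one step that matters. Your first half matches the paper: Cantelli for $Y_A=Q_A-\mathbb E Q_A$ with the bound of Theorem~\ref{thm:variance}, the inclusion $\{Q_A\ge u\}\subseteq\{Y_A\ge u\}$ from $\mathbb E Q_A\le0$, and a union bound over the $M$ sets $A$. You also correctly see that at the exact threshold $u=\sigma\sqrt{M-1}$ this only gives $\mathbb P(\exists A:\ Q_A\ge u)\le1$, which proves nothing.

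The repair is never completed, and the route you pursue does not work. The symmetry argument through $\sum_A Q_A$ collapses because, as you notice yourself, the would-be constant $u-(M-1)\sigma^2/u$ equals $0$. The replacement (``compare second moments'', ``the per-edge identity $\sum_A\xi_u^A\xi_v^A$'') is never specified, so the proposal does not exhibit a good outcome. Your analysis also tracks the non-strict events (``exactly one $A$ would have $Q_A=u$''), but the lemma permits $Q_A=u$. The slack lives in the strict event $\{Q_A>u\}$.

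\paragraph{How the paper closes it.} It works with slack. Cantelli at $\sigma\sqrt{M-1+\varepsilon}$ bounds each bad probability by $1/(M+\varepsilon)$, so the union bound gives $M/(M+\varepsilon)<1$. Hence for every $\varepsilon>0$ some outcome has $Q_A\le\sigma\sqrt{M-1+\varepsilon}$ for all $A$. There are only finitely many block colorings, so a single outcome serves for a sequence $\varepsilon_j\to0$, and letting $j\to\infty$ gives the lemma.

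\paragraph{A fix within your framework.} Your tight-case fallback already contains it once violations are the strict events $\{Q_A>u\}$. Suppose $\mathbb P(Y_A>u)=1/M$. Then $\mathbb P(Y_A\ge u)\ge1/M$, so Cantelli is tight. This forces $Y_A\in\{u,-\sigma_A^2/u\}$ almost surely, hence $\mathbb P(Y_A>u)=0$, a contradiction. Therefore $\mathbb P(Q_A>u)\le\mathbb P(Y_A>u)<1/M$ for every $A$ (trivially when $\sigma_A=0$), and $\mathbb P(\exists A:\ Q_A>u)<1$. Equivalently, $Q_A$ takes finitely many values, so $\{Q_A>u\}=\{Q_A\ge u'\}$ for some $u'>u$, and Cantelli at $u'$ is already strict. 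Neither symmetry among the $Q_A$ nor any study of $\sum_A Q_A$ is needed.
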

\begin{proof}[\bf Proof]
	If $m=0$, then $Q_A=0$ for every $A$, and the assertion is immediate. Assume $m>0$. Fix $A\in\binom{[k]}{\ell}$. Since $\mathbb E(Q_A-\mathbb E Q_A)=0$ and, by Theorem~\ref{thm:variance},
	\begin{equation*}
	\operatorname{Var}(Q_A-\mathbb E Q_A)=\operatorname{Var}(Q_A)\le\left(\frac{\ell(k-\ell)}{k(k-1)}\right)^2m,
	\end{equation*}
	Lemma~\ref{lem:cantelli} gives, for every $\varepsilon>0$,
	\begin{equation*}
\mathbb P\left(Q_A-\mathbb E Q_A>\frac{\ell(k-\ell)}{k(k-1)}\sqrt{\left(\binom{k}{\ell}-1+\varepsilon\right)m}\right)\le\frac{1}{\binom{k}{\ell}+\varepsilon}.
	\end{equation*}
Since there are $\binom{k}{\ell}$ choices of $A$, the union bound gives
	\begin{equation*}
\mathbb P\left(\exists A\in\binom{[k]}{\ell}:Q_A-\mathbb E Q_A>\frac{\ell(k-\ell)}{k(k-1)}
\sqrt{\left(\binom{k}{\ell}-1+\varepsilon\right)m}\right)\le\frac{\binom{k}{\ell}}{\binom{k}{\ell}+\varepsilon}<1.
	\end{equation*}
	By Lemma~\ref{lem:block-second}, we have that for every $\varepsilon>0$, there exists a block-coloring outcome such that, simultaneously for all $A\in\binom{[k]}{\ell}$,
	\begin{equation}\label{eq:QA}
Q_A\le\frac{\ell(k-\ell)}{k(k-1)}\sqrt{\left(\binom{k}{\ell}-1+\varepsilon\right)m}.
	\end{equation}
Now take $\varepsilon=1/r$, $r=1,2,\ldots$.  Since the number of
block-coloring outcomes is finite, there exist a fixed outcome
$\omega^\ast$ and a sequence $r_j\to\infty$ such that $\omega^\ast$
satisfies~\eqref{eq:QA} with $r=r_j$ for every $j$.  Thus, for every
$A\in\binom{[k]}{\ell}$,
\begin{equation*}
Q_A\le\frac{\ell(k-\ell)}{k(k-1)}\sqrt{\left(\binom{k}{\ell}-1\right)m}
\end{equation*}
This completes the proof.
\end{proof}
\begin{proof}[\bf Proof of Theorem~\ref{thm:intro-general}]
Choose the block coloring given by Lemma~\ref{lem:simultaneous}. Fix $A\in\binom{[k]}{\ell}$. By Lemmas~\ref{lem:ell-volume}, \ref{lem:decomposition} and \ref{lem:simultaneous}, we obtain
\begin{align*}
e_G\left(\bigcup_{i\in A}V_i\right)&=\frac{\ell}{k}D_A-\frac{\ell^2}{k^2}m+Q_A\\&\le\frac{\ell}{k}\left(\frac{2\ell m}{k}+\frac{\ell(k-\ell)}{k}(n-1)\right)-\frac{\ell^2}{k^2}m+\frac{\ell(k-\ell)}{k(k-1)}\sqrt{
\left(\binom{k}{\ell}-1\right)m}\\&=\frac{\ell^2}{k^2}m+\frac{\ell^2(k-\ell)}{k^2}(n-1)+\frac{\ell(k-\ell)}{k(k-1)}\sqrt{\left(\binom{k}{\ell}-1\right)m}.
\end{align*}
The chosen coloring outcome satisfies this estimate simultaneously
for every $A\in\binom{[k]}{\ell}$.  Therefore
\begin{equation*}
\max_{\substack{A\subseteq[k]\\ |A|=\ell}}e_G\left(\bigcup_{i\in A}V_i\right)\le\frac{\ell^2}{k^2}m+\frac{\ell^2(k-\ell)}{k^2}(n-1)+\frac{\ell(k-\ell)}{k(k-1)}\sqrt{\left(\binom{k}{\ell}-1\right)m}.
\end{equation*}
Finally, all vertices in $V(\widehat G)\setminus V(G)$ are isolated, so deleting them does not change any induced edge count.  By the construction in Section~\ref{sec:Degree Blocks and Coloring}, the resulting partition $V(G)=V_1\cup\cdots\cup V_k$ is balanced. This completes the proof.
\end{proof}
Finally, setting $\ell=2$ in Theorem~\ref{thm:intro-general} and simplifying the remaining terms gives Corollary~\ref{cor:intro-BS}.
\begin{proof}[\bf Proof of Corollary~\ref{cor:intro-BS}]
If $k=2$, then $V_1\cup V_2=V(G)$ and hence $e_G(V_1\cup V_2)=m$, so the assertion holds with equality. Assume $k\ge3$. Applying Theorem~\ref{thm:intro-general} with $\ell=2$, we obtain a balanced partition satisfying
\begin{equation}\label{eq:coro1}
\max_{1\le i<j\le k} e_G(V_i\cup V_j)\le\frac{4m}{k^2}+\frac{4(k-2)}{k^2}(n-1)+\frac{2(k-2)}{k(k-1)}\sqrt{\left(\binom{k}{2}-1\right)m}.
\end{equation}
Moreover,
\begin{equation*}
\left(\frac{2(k-2)}{k(k-1)}\sqrt{\binom{k}{2}-1}\right)^2-\left(\frac{3\sqrt2(k-2)}{k^2}\right)^2
=\frac{2(k-3)(k-2)^2(k^3+2k^2-5k+3)}{k^4(k-1)^2}\ge0,
\end{equation*}
and hence
\begin{equation}\label{eq:coro3}
\frac{2(k-2)}{k(k-1)}\sqrt{\binom{k}{2}-1}\ge\frac{3\sqrt2(k-2)}{k^2}.
\end{equation}
Since $G$ is simple and (\ref{eq:coro3}), we obtain
\begin{align}\label{eq:coro2}
&\qquad\frac{4(k-2)}{k^2}(n-1)+\frac{2(k-2)}{k(k-1)}\sqrt{\binom{k}{2}-1}\,\sqrt m\notag\\&=
\frac{4(k-2)}{k^2}n-\frac{4(k-2)}{k^2}+\frac{3\sqrt2(k-2)}{k^2}\sqrt m+\left(\frac{2(k-2)}{k(k-1)}\sqrt{\binom{k}{2}-1}-\frac{3\sqrt2(k-2)}{k^2}\right)\sqrt m\notag\\&\le
\frac{3(k-2)}{k^2}\sqrt{2m+\frac14}+\left(\frac{4(k-2)}{k^2}+\frac1{\sqrt2}\left(\frac{2(k-2)}{k(k-1)}\sqrt{\binom{k}{2}-1}-\frac{3\sqrt2(k-2)}{k^2}\right)\right)n-\frac{4(k-2)}{k^2}\notag\\&=
\frac{3(k-2)}{k^2}\sqrt{2m+\frac14}+\left(\frac{k-2}{k^2}+\frac{k-2}{k(k-1)}\sqrt{k(k-1)-2}\right)n-\frac{4(k-2)}{k^2}\notag\\&\le\frac{3(k-2)}{k^2}\sqrt{2m+\frac14}+\left(\frac{k-2}{k^2}+\frac{k-2}{k(k-1)}\sqrt{k(k-1)-2}\right)n+\frac{(k-2)(2k-5)}{10}.
\end{align}
Combining (\ref{eq:coro1}) and (\ref{eq:coro2}), this completes the proof.
\end{proof}

\section*{Declaration on the use of AI}
The authors used ChatGPT 5.6 Pro to assist in discussing proof strategies, checking proofs, and improving exposition.

\end{document}